\theoremstyle{plain}
\newtheorem{theorem}{Theorem}[section]
\newtheorem{lemma}[theorem]{Lemma}
\newtheorem{corollary}[theorem]{Corollary}
\newtheorem{proposition}[theorem]{Proposition}
\newtheorem{remark}[theorem]{Remark}
\newtheorem{question}[theorem]{Question}
\newtheorem{point}[theorem]{}
\newcommand{\m}{\mathfrak{m}}
\newcommand{\R}{\mathcal{R}}
\newcommand{\F}{\mathcal{F}}
\newcommand{\Z}{\mathbb{Z}}
\newcommand{\N}{\mathbb{N}}
\newcommand{\wt}{\widetilde}
\newcommand{\n}{\mathfrak{n}}
\newcommand{\cx}{\operatorname{cx}}
\newcommand{\Ext}{\operatorname{Ext}}
\newcommand{\Tor}{\operatorname{Tor}}
\theoremstyle{plain}
\begin{document}
\title{\large \textbf{Asymptotic behavior of invariants of syzygies of maximal Cohen-Macaulay modules }}
\author{\textsc{Tony J. Puthenpurakal}}
\email{tputhen@gmail.com}
\address{Department of Mathematics, IIT Bombay, Powai, Mumbai 400 076, India}
\author{\textsc{Samarendra Sahoo}}
\email{204093008@iitb.ac.in}

\address{Department of Mathematics, IIT Bombay, Powai, Mumbai 400 076, India}
\date{\today}

\subjclass{Primary 13A30, 13C14, 13D40, Secondary  13D02, 13D07, 13D45}
\keywords{Complete intersection rings, Complexity of modules, Associated graded rings, Castelnuovo-Mumford regularity, Ulrich modules, superficial element, Hilbert coefficients}

\begin{abstract} Let $(A,\m)$ be a complete intersection ring of codimension $c\geq 2$ and dimension $d\geq 1$. Let $M$ be a finitely generated maximal Cohen-Macaulay $A$-module. Set $M_i=\text{Syz}^A_{i}(M)$. Let $e^{\m}_i(M)$ be the $i$-th Hilbert coefficient of $M$ with respect to $\m$. We prove for all $i\gg0$, the function $i\mapsto e^{\m}_j(M_i)$ is a quasi-polynomial type with period $2$ and degree $\text{cx}(M)-1$ for $j=0,1$, where $\text{cx}(M)$ is the complexity of $M.$ For $\text{cx}(M)=2,$ we prove  $$\lim_{n\to \infty}\dfrac{e^{\m}_1(M_{2n+j})}{n}\geq \lim_{n\to \infty}\dfrac{e^{\m}_0(M_{2n+j})}{n}-\lim_{n\to \infty}\dfrac{\mu(M_{2n+j})}{n}$$ for $j=0,1$. When equality holds, we prove that the Castelnuovo-Mumford regularity of the associated graded ring of $M_i$ with respect to the maximal ideal $\mathfrak{m}$ is bounded for all $i\geq 0$.

\end{abstract}

\maketitle

\section{Introduction}
Let $(A,\mathfrak{m})$ be a Noetherian local ring with residue field $k=A/\mathfrak{m}$. Let $I$ be a $\mathfrak{m}$-primary ideal and $M$ be a finitely generated $A$-module. We denote the associated graded ring of $A$ with respect to the ideal $I$ as $G_I(A)=\bigoplus_{n\geq 0}I^n/I^{n+1}$, and the associated graded module of $M$ with respect to $I$ as $G_I(M)=\bigoplus_{n\geq 0}I^nM/I^{n+1}M$, considered as a graded $G_I(A)$-module. Let $\ell(E)$ denotes the length of an $A$-module $E$.

Let $M$ be an $A$-module of dimension $r$. The Hilbert series of $M$ is given by the formula $$H_{M,I}(z)=\sum_{i\geq 0}\ell(I^iM/I^{i+1}M)z^i=\frac{h^I_M(z)}{(1-z)^r},$$ where $h^I_M(z)\in \Z[z]$ and is called the $h$-polynomial of $M$. The integer $e^I_i(M)=h^{(i)}_{M}(1)/i!$ is referred to as the $i$-th Hilbert coefficient of $M$ with respect to $I$, where $h^{(i)}_M(z)$ is the $i$-th derivative of $h_M(z)$. When $I=\mathfrak{m}$, we write $G(A)=G_I(A)$, $G(M)=G_I(M)$ and $e_i(M)=e^I_i(M)$.

There has been extensive research on the behavior of the Betti numbers of the Cohen-Macaulay module over complete intersection rings. For instance, see \cite{Avramov}, \cite{Peeva}, and \cite{Eisenbud}. Set $\beta_i(M)$ as the $i$-th Betti number of $M$, and $\mu(M)$ as the cardinality of the minimal set of generators of $M$. Note that $\beta_i(M)=\mu(M_i)$ for all $i\geq 0$. One interesting result due to Avramov (\cite{Avramov}, Theorem 4.1) is that the function $i\mapsto \beta_i(M)$ is a quasi-polynomial (for the definition of quasi-polynomial, see \ref{qasi}) type with period 2 for sufficiently large $i$. Furthermore, the polynomials $P_0$ and $P_1$ have the same degree and leading coefficient. Similar questions can be posed regarding the behavior of other invariants of $M_i$ for sufficiently large $i.$

Let $M_i=\text{Syz}^A_i(M)$ for all $i\geq 1$, representing the $i$-th syzygy of $M$ in its minimal free resolution, and for convenience, set $M_0=M$. In \cite{Quasipolynomial}, the first author focused on the Hilbert coefficients of the syzygies of the modules and established that for a maximal Cohen-Macaulay module $M$ over complete intersection rings (see \cite{Quasipolynomial}, 2.4), the function $i\mapsto e^I_i(M_i)$ is a quasi-polynomial type with period 2 and degree at most $\text{cx}(M)-1$ for all $0\leq i\leq 2$. In this paper, we explore the degree of the function $i\mapsto e_t(M_i)$ for $t=0,1$ and reg $G(M_i)$ (for the definition of regularity, see \ref{regL}) for sufficiently large values of $i$ over complete intersection rings.

In the first part of this paper (Section $3$), we will address the following questions:

\begin{question}
\label{que}
Let $(A,\mathfrak{m})$ be a complete intersection ring and $M$ be a maximal Cohen-Macaulay $A$-module.
\begin{enumerate}
    \item[{(a)}] When does the function $i\mapsto e_t(M_i)$ attain its maximal degree for $t= 0,1$?
    \item[{(b)}] When do the polynomials $P_0$ and $P_1$ have the same degree?
\end{enumerate}
\end{question}
For the leading coefficients of $P_0$ and $P_1$, we have both positive and negative answers for the function $i\mapsto e_t(M_i)$ for $t=0,1$. For example,
\begin{enumerate}
    \item Let $(A,\m)=(k[|x,y|]/(x^2y^2),(\overline{x},\overline{y}))$, where $k$ is a field and $M=A/(\overline{x^2})$. Note that $M$ is a maximal Cohen-Macaulay $A$-module. It is easy to check that $e_0(M_i)=2$ and $e_1(M_i)=1$ for all $i\geq 0.$
    \item Let $M$ be a maximal Cohen-Macaulay module with $e_0(M)=\mu(M)$ (in particular, $M$ is Ulrich) over a hypersurface ring with $e_0(A)>2$. Then, $e_0(M_{2i})\neq e_0(M_{2i+1})$ for all $i\geq 0$ (see \cite{HCMCMPart1}, Theorem 2, Remark 4.10).
    \item Let $(A,\m)=(k[|x,y|]/(xy^2),(\overline{x},\overline{y}))$ and $M=A/(\overline{x})$. Note that $M$ is a maximal Cohen-Macaulay $A$-module. It is easy to check that $e_1(M_{2i})=0$ and $e_1(M_{2i+1})=1$ for all $i\geq 0.$
\end{enumerate}
However, we are unable to provide sufficient conditions for the leading coefficients to be equal. Additionally, the Question \ref{que} remains open for $t\geq 2$.

First, we will present the following result, which provides a positive response to Question \ref{que} for $t=0$ and for an arbitrary $\m$-primary ideal, which is not very difficult to prove.
\begin{proposition}
\label{1st}
    Let $(A,\m)$ be a complete intersection ring and $I$ a $\m$-primary ideal. Let $M$ be a maximal Cohen Macaulay module. Then the function $i\mapsto e^I_0(M_i)$ is a quasi-polynomial type with period $2$ and degree $\operatorname{cx}(M)-1$ for all $i\gg0.$
\end{proposition}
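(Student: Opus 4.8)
The plan is to reduce the statement to a first-order recurrence for the multiplicities $e^I_0(M_i)$ driven by the Betti sequence, and then to invoke Avramov's quasi-polynomiality for Betti numbers. First I would record that every syzygy $M_i$ is again maximal Cohen-Macaulay. Since $A$ is Cohen-Macaulay and $M$ is MCM, the depth lemma applied inductively to the short exact sequences
$$0 \to M_{i+1} \to F_i \to M_i \to 0, \qquad F_i = A^{\beta_i(M)},$$
arising from a minimal free resolution of $M$ gives $\depth M_{i+1} \geq \min\{\depth F_i, \depth M_i + 1\} = d$, so $\dim M_i = d$ for all $i$. In particular all three modules in each sequence have the same dimension $d$, and additivity of the Hilbert--Samuel multiplicity on short exact sequences of modules of equal dimension, together with $e^I_0(F_i) = \beta_i(M)\, e^I_0(A)$, yields
$$e^I_0(M_i) + e^I_0(M_{i+1}) = \beta_i(M)\, e^I_0(A) \quad \text{for all } i \geq 0.$$

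Next I would analyze this recurrence. Writing $a_i = e^I_0(M_i)$, $b_i = \beta_i(M)$ and $e = e^I_0(A)$, subtracting consecutive relations gives the telescoping identity $a_{i+2} - a_i = e(b_{i+1} - b_i)$. By Avramov's theorem (\cite{Avramov}, Theorem 4.1) the sequence $b_i$ is, for $i \gg 0$, a quasi-polynomial of period $2$ whose two polynomial branches $P_0, P_1$ share the same degree $\cx(M) - 1$ and the same leading coefficient. Restricting $i$ to a fixed parity, the difference $b_{i+1} - b_i$ is then eventually a polynomial in $i$ of degree at most $\cx(M) - 2$, the top terms cancelling precisely because $P_0$ and $P_1$ have the same leading coefficient. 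Summing the telescoping identity along each congruence class modulo $2$ expresses $a_{2n}$ and $a_{2n+1}$ as polynomials $Q_0(n), Q_1(n)$ for $n \gg 0$, so $i \mapsto e^I_0(M_i)$ is a quasi-polynomial of period $2$ with both branches of degree at most $\cx(M) - 1$.

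Finally I would pin down the degree exactly. Having established $a_{2n} = Q_0(n)$ and $a_{2n+1} = Q_1(n)$ for $n \gg 0$, the original relation $a_{2n} + a_{2n+1} = e\,b_{2n} = e\,P_0(2n)$ forces $Q_0 + Q_1$ to have degree exactly $\cx(M) - 1$, since $e \neq 0$ and $P_0$ has degree $\cx(M) - 1$. Hence at least one branch attains degree $\cx(M) - 1$, and combined with the upper bound this shows the quasi-polynomial has degree exactly $\cx(M) - 1$. I expect the only delicate point to be the bookkeeping of the leading-term cancellation in $b_{i+1} - b_i$, which is exactly where the equality of leading coefficients of $P_0$ and $P_1$ in Avramov's theorem is essential; the remaining steps are formal manipulations of the recurrence together with the standard additivity of multiplicity.
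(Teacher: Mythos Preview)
Your argument is correct and takes a genuinely different route from the paper's proof. The paper proceeds by induction on $\cx(M)$: after reducing to the case of an uncountable residue field it invokes the Eisenbud-operator machinery (Proposition~\ref{cxM}) to produce, for $i\gg 0$, short exact sequences $0\to K_i\to M_{i+2}\to M_i\to 0$ with $\cx(K_i)=\cx(M)-1$, applies additivity of $e^I_0$ to get $e^I_0(M_{i+2})-e^I_0(M_i)=e^I_0(K_i)$, and concludes via the induction hypothesis and Corollary~\ref{deg}. You instead work directly with the syzygy sequences $0\to M_{i+1}\to A^{\beta_i(M)}\to M_i\to 0$, obtain the closed relation $e^I_0(M_i)+e^I_0(M_{i+1})=\beta_i(M)\,e^I_0(A)$, and reduce the problem entirely to Avramov's theorem on Betti numbers---crucially using that the two branches of $i\mapsto\beta_i(M)$ share the same leading coefficient, so that $b_{i+1}-b_i$ drops a degree on each parity class. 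Your approach is more elementary for $e_0$ since it bypasses Proposition~\ref{cxM}; the paper's approach, on the other hand, is the one that generalises to $e_1$ in Theorem~\ref{e1}, where additivity along the syzygy sequences fails but the $L$-functor is exact on the $K_i$-sequences (Lemma~\ref{exact}). One small remark: your conclusion only shows $\max\{\deg Q_0,\deg Q_1\}=\cx(M)-1$, not that both branches attain this degree, but that is exactly what ``degree of the quasi-polynomial'' means in the paper (Lemma~\ref{quaso}), and the paper's own proof yields no more.
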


In order to prove Question \ref{que} for the first Hilbert coefficient, additional conditions are required for both rings and the ideal. For example, let $I=(x_1,\ldots ,x_d)$ be an ideal of $A$ generated by a maximal $A$- regular sequence. Then $x_1,\ldots ,x_d$ is a maximal $M_i$-regular sequence for all $i\geq 0.$ Therefore, for all $i\geq 0$, the Hilbert series of $M_i$ is $H_{M_i,I}(z)=\dfrac{h^I_{M_i}(z)}{(1-z)^d},$ where $h^I_{M_i}(z)=\ell(M_i/IM_i)$ (see \cite{Bruns}, Theorem 1.1.8). Hence $e^I_1(M_i)=0$ for all $i\geq 0.$ We prove the following for the maximal ideal.

\begin{theorem}
\label{2nd}
     Let $(A,\m)$ be a complete intersection ring of codimension $c\geq 2$ and dimension $d\geq 1$. Let $M$ be a maximal Cohen Macaulay $A$-module. Set $M_i=\operatorname{Syz}^A_i(M)$ for all $i\geq 0.$ Then the function $i\mapsto e_1(M_i)$ is a quasi-polynomial type with period $2$ and degree $\operatorname{cx}(M)-1$ for all $i\gg0.$
\end{theorem}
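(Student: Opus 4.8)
The plan is to fix the residue field first and then convert the theorem into a statement about a single correction term, which I control through cohomology operators. Since complexity, Hilbert coefficients and syzygies are all preserved under the faithfully flat base change $A \rightsquigarrow A[X]_{\m A[X]}$, I may assume the residue field is infinite. Two facts are then available. By \cite{Quasipolynomial} the function $i\mapsto e_1(M_i)$ is already quasi-polynomial of period $2$ and degree \emph{at most} $\cx(M)-1$; and by Proposition \ref{1st} together with Avramov's theorem (\cite{Avramov}, Theorem 4.1) the functions $i\mapsto e_0(M_i)$ and $i\mapsto \mu(M_i)=\beta_i(M)$ are quasi-polynomial of period $2$ and degree \emph{exactly} $\cx(M)-1$, the latter with equal leading coefficients on the two residue classes. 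Thus the whole content of the theorem is the lower bound: I must show that $e_1(M_i)$ does not grow more slowly than $i^{\cx(M)-1}$ on either parity class.

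The engine is the family of minimal presentations $0\to M_{i+1}\to F_i\to M_i\to 0$ with $F_i=A^{\beta_i(M)}$. Tensoring with $A/\m^{n+1}$ and using that $F_i$ is free yields, for every $i$ and $n$, the length identity
\[
\ell\!\left(M_{i+1}/\m^{n+1}M_{i+1}\right)+\ell\!\left(M_i/\m^{n+1}M_i\right)=\beta_i(M)\,\ell\!\left(A/\m^{n+1}\right)+\ell\!\left(\Tor_1^A(M_i,A/\m^{n+1})\right).
\]
Passing to Hilbert series and clearing the common factor $(1-z)^d$ (all modules here are maximal Cohen--Macaulay of dimension $d$) gives an identity of $h$-polynomials $h_{M_{i+1}}(z)+h_{M_i}(z)=\beta_i(M)\,h_A(z)+(1-z)^dT_i(z)$, where $T_i(z)=\sum_{n}\ell(\Tor_1^A(M_i,A/\m^{n+1}))z^n$. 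An Artin--Rees estimate shows that for fixed $i$ the coefficients of $T_i$ are eventually polynomial in $n$ of degree at most $d-1$, so $(1-z)^dT_i(z)$ is regular at $z=1$; differentiating the $h$-polynomial identity at $z=1$ then produces a clean recursion $e_1(M_{i+1})+e_1(M_i)=\beta_i(M)\,e_1(A)+E_i$, in which $E_i$ depends only on the two leading coefficients of the Tor-length polynomial.

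The theorem now reduces to showing that the right-hand side of this recursion has degree exactly $\cx(M)-1$ in $i$, and this is the main obstacle; it is precisely where the non-Cohen--Macaulayness of $G(M_i)$ enters. Using the dimension shift $\Tor_1^A(M_i,A/\m^{n+1})\cong \Tor_{i+1}^A(M,A/\m^{n+1})$ and the finite generation of the total Tor module over the ring of cohomology operators $A[t_1,\dots,t_c]$ with $\deg t_j=2$, I would first deduce that $i\mapsto E_i$ is quasi-polynomial of period $2$ and degree at most $\cx(M)-1$. When $e_1(A)\neq 0$ the term $\beta_i(M)\,e_1(A)$ already has degree exactly $\cx(M)-1$ with positive, parity-independent leading coefficient by Avramov's theorem, and the remaining point is only to check that $E_i$ cannot cancel it, forcing $e_1(M_i)$ to have degree $\cx(M)-1$ on both classes. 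The genuinely delicate case is $e_1(A)=0$, where the leading growth must come from $E_i$ itself: here I would extract it by running the same recursion for $e_0$ (whose correction is controlled by Proposition \ref{1st}, since $e_0(A)\neq0$ always) and by rewriting $E_i$ through the local cohomology modules $H^j_{\mathfrak{M}}(G(M_i))$ that measure the failure of $G(M_i)$ to be Cohen--Macaulay. Bounding these cohomologies well enough to guarantee that the correction does not annihilate the leading term is the crux of the argument; granting it, the cohomology-operator structure supplies period $2$, and comparison with the exact-degree growth of $e_0(M_i)$ and $\beta_i(M)$ pins the degree of $e_1(M_i)$ to exactly $\cx(M)-1$.
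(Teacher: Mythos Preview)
Your recursion $e_1(M_{i+1})+e_1(M_i)=\beta_i(M)\,e_1(A)+E_i$ links \emph{consecutive} syzygies, so the left side is a \emph{sum} across the two parity classes rather than a difference within one class. Even if you could show that the right side has degree exactly $\cx(M)-1$, this would only force $\max\{\deg P_0,\deg P_1\}=\cx(M)-1$; it cannot rule out that one parity class has strictly smaller degree (take $P_0=0$, $P_1(n)=n$ as a model). Non-negativity of $e_1(M_i)$ does not help here, and the inequality $e_1(M_i)\ge e_0(M_i)-\mu(M_i)$ does not either, since the leading coefficients of $e_0(M_i)$ and $\mu(M_i)$ can coincide on a parity class (indeed the paper studies precisely that boundary case in Lemma~\ref{3rd} and Theorem~\ref{5th}). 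Moreover, even the claim that the right side has exact degree $\cx(M)-1$ is left open: you assert that $E_i$ ``cannot cancel'' $\beta_i(M)e_1(A)$ but give no mechanism preventing it, and the case $e_1(A)=0$ is only described as delicate. Finally, and tellingly, the hypothesis $c\ge 2$ is never invoked in your outline; since the statement is false for hypersurfaces (an Ulrich module $M$ over a hypersurface with $e_0(A)>2$ has $e_1(M_{2i})=0$, so the degree is $-\infty\ne 0=\cx(M)-1$), any correct proof must use it somewhere.

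The paper avoids all of this by working \emph{within} each parity class. After reducing to $d=1$ via a superficial sequence, it uses an Eisenbud operator (Proposition~\ref{cxM}) to produce short exact sequences $0\to K_i\to M_{i+2}\to M_i\to 0$ with $\cx(K_i)=\cx(M)-1$, and proves (Lemma~\ref{exact}) that these stay exact after applying $L(-)$, so that $e_1(M_{i+2})-e_1(M_i)=e_1(K_i)$. This difference recursion, together with Corollary~\ref{deg}, reduces the problem to the same statement for $K_i$, and induction on complexity finishes once the base case $\cx=1$ is handled. That base case is exactly where $c\ge 2$ enters: if $e_1(M_i)=0$ for some $i$ with $\cx(M)=1$, then $M_i$ is Ulrich, $M_i/xM_i\cong k^{\mu(M_i)}$ has a period-$2$ resolution, forcing $A/xA$ and hence $A$ to be a hypersurface, a contradiction. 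Your outline has no analogue of this step, and the consecutive-syzygy recursion does not seem to provide one.
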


Next, we will compare the leading coefficients of the functions $i \mapsto \mu(M_i)$, $i \mapsto e_0(M_i)$, and $i \mapsto e_1(M_i)$ for sufficiently large $i$.

 \begin{lemma}
 \label{3rd}
     Let $(A,\m)$ be a complete intersection ring of codimension $c\geq 2$ and dimension $d\geq 1$. Let $M$ be a maximal Cohen Macaulay $A$-module. Then for $j=0,1$ $$\lim_{n\to \infty}\dfrac{e_1(M_{2n+j})}{n^{\operatorname{cx}(M)-1}}\geq \lim_{n\to \infty}\dfrac{e_0(M_{2n+j})}{n^{\operatorname{cx}(M)-1}}-\lim_{n\to \infty}\dfrac{\mu(M_{2n+j})}{n^{\operatorname{cx}(M)-1}}.$$
\end{lemma}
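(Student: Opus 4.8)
The plan is to reduce the asymptotic statement to a \emph{pointwise} inequality valid for every individual syzygy, namely the module version of Northcott's inequality
$$e_1(N) \geq e_0(N) - \mu(N) \qquad \text{for every maximal Cohen--Macaulay } A\text{-module } N,$$
and then to pass to the limit separately along the two residue classes modulo $2$.

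First I would establish the pointwise inequality. Since $e_0$, $e_1$, $\mu$ and $\cx$ are unaffected by the faithfully flat base change $A \to A[t]_{\m A[t]}$, which preserves the complete intersection property and commutes with taking syzygies, I may assume the residue field infinite. Given a maximal Cohen--Macaulay $N$ of dimension $d \geq 1$, I choose a superficial element $x \in \m$ that is a nonzerodivisor on $N$; modding out by a maximal such sequence preserves $e_0$, $e_1$ and $\mu(N) = \ell(N/\m N)$ (the last since $N/xN \otimes_A k = N \otimes_A k$), so I reduce to the case $d = 1$. In dimension one, writing $H_n = \ell(\m^n N/\m^{n+1}N)$ and $H_{N,\m}(z) = h^{\m}_N(z)/(1-z)$, one has the standard identity $e_1(N) = \sum_{n \geq 0}\bigl(e_0(N) - H_n\bigr)$, a finite sum. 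The key observation is that $x\,\m^n N \subseteq \m^{n+1}N$ realizes $\m^n N/\m^{n+1}N$ as a quotient of $\m^n N/x\,\m^n N$; since $x$ is a nonzerodivisor generating a minimal reduction of $\m$ on the maximal Cohen--Macaulay module $\m^n N$, one has $\ell(\m^n N/x\,\m^n N) = e_0(\m^n N) = e_0(N)$, whence $H_n \leq e_0(N)$ for all $n$. Therefore every summand with $n \geq 1$ is nonnegative, and
$$e_1(N) = \bigl(e_0(N) - H_0\bigr) + \sum_{n \geq 1}\bigl(e_0(N) - H_n\bigr) \geq e_0(N) - H_0 = e_0(N) - \mu(N).$$

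Finally I would apply this with $N = M_{2n+j}$, which is maximal Cohen--Macaulay for every $n$, obtaining the termwise inequality $e_1(M_{2n+j}) \geq e_0(M_{2n+j}) - \mu(M_{2n+j})$. Dividing by $n^{\cx(M)-1}$ and letting $n \to \infty$, all three relevant limits exist: the one for $e_0$ by Proposition \ref{1st}, the one for $e_1$ by Theorem \ref{2nd}, and the one for $\mu(M_i) = \beta_i(M)$ by Avramov's quasi-polynomiality of the Betti numbers (\cite{Avramov}). Since a termwise inequality between convergent sequences survives passage to the limit, and the limit of the difference on the right-hand side equals the difference of the two limits, the asserted inequality follows for $j = 0,1$.

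I expect the genuine content to lie entirely in the pointwise inequality $e_1 \geq e_0 - \mu$; once it is available, the passage to the limit is a routine consequence of the quasi-polynomiality results already established. The one point requiring care is verifying that the superficial reduction to dimension one genuinely leaves $e_0$, $e_1$ and $\mu$ unchanged, which is exactly where the maximal Cohen--Macaulay hypothesis on the syzygies is used.
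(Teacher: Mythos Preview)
Your proposal is correct and follows essentially the same route as the paper: both arguments invoke the pointwise inequality $e_1(N)\geq e_0(N)-\mu(N)$ for maximal Cohen--Macaulay modules (the paper quotes it as \ref{hcs}(7) from \cite{HCCMM}, while you reprove it via reduction to dimension one and the formula $e_1(N)=\sum_{n\geq 0}(e_0(N)-H_n)$), and then pass to the limit using the quasi-polynomiality of $\mu$, $e_0$, and $e_1$ established earlier. The only difference is that you supply a self-contained proof of the pointwise inequality rather than citing it.
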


Let $M$ be an $A$-module with dimension $d$. One interesting question is how the invariants $a_j(M_i)$ (see \ref{regL}) change for large values of $i$ and for $0\leq j\leq d$. In (\cite{Quasipolynomial}, Theorem 8.1), the first author established that the set $\{a_{d-1}(G(M_n))/n^{\operatorname{cx}(M)-1}\, |\, n\geq 1\}$ is bounded. However, in this paper, we prove an even stronger result, which is as follows:

 \begin{theorem}
 \label{5th}
     Let $(A,\m)$ be a complete intersection ring of codimension $c\geq 2$ and dimension $d\geq 1$. Let $M$ be a maximal Cohen Macaulay $A$-module. If $\operatorname{cx}(M)=2$ and $$\lim_{n\to \infty}\dfrac{e_1(M_{2n+j})}{n}= \lim_{n\to \infty}\dfrac{e_0(M_{2n+j})}{n}-\lim_{n\to \infty}\dfrac{\mu(M_{2n+j})}{n}$$ for $j=0,1$. Then $\text{reg }G(M_n)$ is bounded for all $n\gg0.$
\end{theorem}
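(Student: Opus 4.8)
The plan is to reduce the statement first to a claim about the $h$-polynomials of the $M_n$ and then to the one-dimensional case. Write $h_{M_n}(z)=\sum_{k\ge 0}a_k(n)z^k$ for the $h$-polynomial, so that $a_0(n)=\mu(M_n)$, $e_0(M_n)=\sum_k a_k(n)$ and $e_1(M_n)=\sum_k k\,a_k(n)$. A direct computation gives the unconditional identity
\[
f(n):=e_1(M_n)-e_0(M_n)+\mu(M_n)=\sum_{k\ge 2}(k-1)\,a_k(n),
\]
which holds for every $n$ with no Cohen--Macaulay hypothesis on $G(M_n)$. By Proposition \ref{1st}, Theorem \ref{2nd}, and Avramov's quasi-polynomiality of $i\mapsto \mu(M_i)=\beta_i(M)$, each of $e_0(M_n)$, $e_1(M_n)$, $\mu(M_n)$ is for $n\gg 0$ a quasi-polynomial of period $2$ and degree $\cx(M)-1=1$ on each residue class modulo $2$. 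Hence $f$ is a quasi-polynomial of period $2$ and degree at most $1$; its degree-one coefficient on each parity class is nonnegative by Lemma \ref{3rd}, and the equality hypothesis forces it to vanish. Therefore $f(2n+j)$ is eventually constant for $j=0,1$, so there is a constant $C$ with $0\le f(n)\le C$ for all $n\gg 0$.

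Next I would reduce to $\dim A=1$. After replacing $A$ by $A[X]_{\m A[X]}$ to guarantee an infinite residue field, I would choose a sequence $x_1,\dots,x_{d-1}$ that is superficial for $\m$ on every $M_n$ simultaneously; the existence of such a uniform sequence is exactly the type of statement furnished by the finite generation of a suitable bigraded module built from the $G(M_n)$ over the ring of cohomology operators attached to the complete intersection, as in \cite{Quasipolynomial}. Setting $N_n=M_n/(x_1,\dots,x_{d-1})M_n$, these are one-dimensional Cohen--Macaulay modules with $e_0(N_n)=e_0(M_n)$, $e_1(N_n)=e_1(M_n)$, and $\mu(N_n)=\mu(M_n)$, so that $f(N_n)=f(M_n)\le C$ for all $n\gg 0$.

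The core is then a one-dimensional estimate: for a one-dimensional Cohen--Macaulay module $N$, a bound on $f(N)$ bounds the reduction number and hence $\operatorname{reg}G(N)$. If $G(N)$ were Cohen--Macaulay the $a_k$ would be nonnegative and the displayed identity would give $\sum_{k\ge 2}(k-1)a_k\le C$, whence $\deg h_N\le C+1$ and $\operatorname{reg}G(N)=\deg h_N\le C+1$. To remove the Cohen--Macaulay hypothesis I would pass to the Ratliff--Rush filtration $\{\wt{\m^k N}\}$, whose associated graded module has positive depth (so its $h$-coefficients are nonnegative), has the same $e_0,e_1$, and has zeroth coefficient $\ell(N/\wt{\m N})\le\mu(N)$; this bounds the Ratliff--Rush reduction number by $C+1$. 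Since $\wt{\m^k N}=\m^k N$ once $k$ exceeds the reduction number, and since the two filtrations differ only in the finite-length module $\bigoplus_k\wt{\m^k N}/\m^k N=H^0_{G(\m)_+}(G(N))$, this controls both $a_0(G(N))$ and $a_1(G(N))$, so $\operatorname{reg}G(N)=\max\{a_0(G(N)),\,a_1(G(N))+1\}$ is bounded by a constant independent of $n$.

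Finally I would lift the bound from $N_n$ back to $M_n$, and this is where I expect the main obstacle to lie. The initial forms $x_1^*,\dots,x_{d-1}^*$ need not be a regular sequence on $G(M_n)$, so $G(N_n)$ and $G(M_n)/(x_1^*,\dots,x_{d-1}^*)G(M_n)$ differ, and the discrepancy is governed by the lower local cohomology modules $H^i_{G(\m)_+}(G(M_n))$ for $i<d$. The strategy is to use that a superficial sequence gives a filter-regular sequence of initial forms, so that $\operatorname{reg}G(M_n)$ exceeds $\operatorname{reg}G(N_n)$ only by the end-degrees of these lower local cohomology modules, and then to bound those end-degrees uniformly in $n$ via the finite generation of $\bigoplus_n G(M_n)$ over the cohomology-operator ring. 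This refines the boundedness of $a_{d-1}(G(M_n))/n^{\cx(M)-1}$ from \cite{Quasipolynomial}, Theorem 8.1, into boundedness of the individual $a_i$ under the equality hypothesis. Assembling these estimates yields that $\operatorname{reg}G(M_n)$ is bounded for all $n\gg 0$.
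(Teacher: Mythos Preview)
Your approach is genuinely different from the paper's, and the gap lies precisely where you suspect: the lifting step from $N_n$ back to $M_n$. Your mechanism for this step does not work. You propose to bound the end-degrees $a_i(G(M_n))$ for $i<d$ ``via the finite generation of $\bigoplus_n G(M_n)$ over the cohomology-operator ring,'' but finite generation yields only polynomial growth in $n$, not boundedness; this is exactly why \cite{Quasipolynomial}, Theorem~8.1 gives only $a_{d-1}(G(M_n))/n$ bounded rather than $a_{d-1}(G(M_n))$ bounded. You use the equality hypothesis solely to make $f(n)$ eventually constant, which feeds into your one-dimensional estimate, but you never explain how the hypothesis enters the lifting. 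Without that, your argument would prove $\operatorname{reg}G(M_n)$ bounded for every MCM module of complexity $2$, which is not known. (Your one-dimensional step via Ratliff--Rush is also sketchy: the assertion that $\widetilde{\m^k N}=\m^k N$ once $k$ exceeds the reduction number, and that this alone controls $a_0(G(N))$, needs justification.)

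The paper takes a different route that avoids the descent--ascent problem entirely. From the short exact sequences $0\to K_n\to M_{n+2}\to M_n\to 0$ of Proposition~\ref{cxM}, Lemma~\ref{3rd} (Lemma~\ref{cxM2} in the body) shows that the equality hypothesis forces $e_1(K_n)=e_0(K_n)-\mu(K_n)$, hence $G(K_n)$ is Cohen--Macaulay for $n\gg 0$. This is the key use of the hypothesis. One then proves directly in dimension $d$ that $0\to L(K_n)\to L(M_{n+2})\to L(M_n)\to 0$ is exact (the obstruction $U\subseteq L(K_n)$ has $\dim U\le d-1$, but Cohen--Macaulayness of $G(K_n)$ forces the associated primes of $L(K_n)$ to have dimension $d$, so $U=0$). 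Since $H^i(L(K_n))=0$ for $i<d$, the long exact cohomology sequence gives $H^i(L(M_{n+2}))\hookrightarrow H^i(L(M_n))$ for $i\le d-1$; these are $*$-Artinian, so the chain stabilizes and each $a_i(L(M_n))$ is eventually constant. The bounds on $a_i(G(M_n))$ then follow from the exact sequence $0\to G(M_n)\to L(M_n)\to L(M_n)(-1)\to 0$, and $a_d(G(M_n))\le \operatorname{red}_\m(A)-d$ handles the top. The essential idea you are missing is that the equality hypothesis makes $G(K_n)$ Cohen--Macaulay, and that this is what collapses the local cohomology and gives stabilization rather than mere polynomial growth.
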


We will now provide a brief overview of this paper. In Section 2, we will present some necessary preliminary results. Section 3 will outline the proofs of Theorem \ref{1st} and Theorem \ref{2nd}. In Section 4, we will present evidence for Lemma \ref{3rd}. Finally, in Section 5, we will demonstrate Theorem \ref{5th} using Lemma \ref{3rd}.

\section{Preliminaries}
Throughout this paper all rings considered are Noetherian and all modules considered, unless stated otherwise, are finitely generated.

\begin{point}
Base change \\
\normalfont
   Let $\phi:(A,\mathfrak{m})\to (A',\mathfrak{m}')$ be a local ring homomorphism. Assume $A'$ is a faithfully flat $A$-algebra with $\mathfrak{m}A=\mathfrak{m}'$. Let $M$ be an $A$-module. Set $M'=M\otimes A'$. In these cases, it can be shown that
\begin{enumerate}
    \item $\ell_A(M)=\ell_{A'}(M')$.
    \item $H(M,n)=H(M',n)$ for all $n\geq 0.$
    \item dim $M=$ dim $M$ and $\text{ depth}_A M= \text{ depth}_{A'} M'$.
    \item depth $G(M)=\text{ depth } G(M')$.
    \item $A$ is a local complete intersection iff $A'$ is a local complete intersection.
\end{enumerate}
\end{point}
The specific base changes we do are the following:
\begin{enumerate}
    \item[(i)] $A'=\hat{A}$ the completion of $A$ with respect to the maximal ideal.
    \item[(ii)] $A'=A[[X]]_S$, where $S=A[[X]]\setminus \m A[[X]].$ The maximal ideal of $A'$ is $\m '=\m A'$ and the residue field of $A'$ is $K=k((X)).$
\end{enumerate}
Thus, we can assume that the ring A is complete and the residue field is uncountable.

\begin{point}
The Rees module $L(M)$\\
\label{hpolyn}
      \normalfont
     Let $(A,\m)$ be a local ring and $M$ be an $A$-module. Define  $L(M)=\bigoplus_{n\geq 0}M/\m^{n+1}M$. Let $\R=A[\m t]$ be Rees ring and
  $\mathcal{R}(M)=\bigoplus_{n\geq 0}\m^nMt^n$ be Rees module of $M$. The Rees ring $\mathcal{R}$ is a subring of $A[t]$. So $A[t]$ is an $\mathcal{R}$ module. Therefore $M[t] = M\otimes_A A[t]$ is an $\R$-module. The exact sequence $$0\to \mathcal{R}(M)\to M[t]\to L(M)(-1)\to 0 $$ defines an $\mathcal{R}$ module structure on  $L(M)(-1)$ and hence on $L(M)$ (for more details see (\cite{Part1}, definition 4.2)). Note that $L(M)$ is not a finitely generated $\R$-module.
\end{point}

\begin{point}
    \normalfont
The Hilbert-Samuel function of the module $M$ with respect to the ideal $\mathfrak{m}$ is defined as the function $n \mapsto \ell(M/\mathfrak{m}^{n+1}M)$. It is well-known that for large values of $n$, this function can be expressed as a polynomial $P_M(n)$ of degree equal to the dimension of $M$ (say $r$), known as the Hilbert-Samuel polynomial of $M$. The polynomial can be written in the form $$P_M(n)=e_0(M)\binom{n+r}{r}-e_1(M)\binom{n+r-1}{r-1}+\ldots +(-1)^re_r,$$ where $e_0(M), \ldots , e_r(M)$ are the Hilbert coefficients of $M$.

 It can be easily seen that the formal power series $\sum_{n\geq 0}\ell{(M/\m^{n+1}M)}z^n$ represents a rational function $$\sum_{n\geq 0}\ell{(M/\m^{n+1}M)}z^n=\dfrac{h_M(z)}{(1-z)^{r+1}},$$ i,e $H(L(M),z)=\dfrac{h_M(z)}{(1-z)^{r+1}}.$  \end{point}

\begin{point}
\label{L_1M}
    \normalfont
    Set $L_1(M)=\bigoplus_{n\geq 0}\Tor^{A}_1(M,A/\m^{n+1}).$ Note that $L(A)(-1)\otimes M=L(M)(-1).$ By tensoring the exact sequence $0\to \mathcal{R}\to A[t]\to L(A)(-1)\to 0$ with $M$, we get $$0\to L_1(M)(-1)\to \mathcal{R}\otimes M\to M[t]\to L(M)(-1)\to 0.$$ Thus $ L_1(M)(-1)$ is a $\mathcal{R}$-submodule of $\mathcal{R}\otimes M.$  Since $\mathcal{R}\otimes M$ is finitely generated $\mathcal{R}$-module so $L_1(M)$ is a finitely generated $\mathcal{R}$-module.
\end{point}

\begin{point}
Superficial element\\
    \normalfont
   An element $x\in  \m$ is called $M$-superficial with respect to $\m$ if there exists $c\in \N$ such that for all $n \geq c$, $(\m^{n+1}M :_Mx)\cap \m^cM = \m^nM$. If depth $M>0$ then one can show that a $M$-superficial element is $M$-regular. Furthermore $(\m^{n+1}M :_Mx) = \m^nM$ for $n\gg0.$ Superficial elements exist if the residue field is infinite. See [\cite{JDSally}, p. 6-8] for proof of these facts when $M = A$, the same proof works in general.

    A sequence $x_1, \ldots ,x_r$  in $(A, \m)$ is said to be $M$-superficial sequence if $x_1$ is $M$-superficial and $x_i$ is $M/(x_1,\ldots ,x_{i-1})M$-superficial for $2\leq i \leq r .$
\end{point}

\begin{point}[See \cite{HCCMM}]
\label{hcs}
\normalfont
Let $(A,\m)$ be a Cohen Macaulay local ring and $M$ a finite Cohen Macaulay $A$-module of dimension $r>0.$ Let $x_1,\ldots ,x_s$ be $A\oplus M$-superficial sequence with $s\leq r$. Set $J=(x_1,\ldots ,x_s)$. The local ring $(B,\mathfrak{p})=(A/J,\m/J)$ and $B$-module $N=M/JM$ satisfy:
\begin{enumerate}
    \item $x_1,\ldots ,x_s$ is a $A\oplus M$-regular sequence.
    \item dim $M$=dim $N+s$.
    \item $N$ is a Cohen Macaulay $B$-module.
    \item $e_i(M)=e_i(N)$ for all $0\leq i\leq r-s.$

    \item $\mu(M)=\mu(N).$
    \item When $s=1$, set $x=x_1$ and $b_n(x,M)=\ell((\m^{n+1}M:x)/\m^nM)$. We have:
    \begin{enumerate}
        \item[(a)] $b_0(x,M)=0$ and $b_n(x,M)=0$ for all $n\gg0.$
        \item[(b)] $H(M,n)=\sum_{i=0}^{n}H(N,i)-b_n(x,M).$
        \item[(c)] $e_r(M)=e_r(N)-(-1)^r(\sum_{i\geq 0}b_i(x,M)).$
        \item[(d)] $x^*$ is $G(M)$-regular iff $b_n(x,M)=0$ for all $n\geq 0.$
    \end{enumerate}
    \item $e_1(M)\geq e_0(M)-\mu(M).$
    \item If $e_1(M) = e_0(M) - \mu(M)$, then $G(M)$ has minimal multiplicity and is Cohen-Macaulay.
\end{enumerate}
\end{point}

\begin{point}
Quasi-polynomial \label{qasi}\\
    \normalfont
    The function $f:\Z_{\geq 0}\to \Z_{\geq 0}$ is said to be a quasi-polynomial type with period $r>0$ if there exist polynomials $P_0,P_1,\ldots ,P_{r-1}$ such that $f(mr+i)=P_i(m)$ for all $m\gg0$ and $i=0,1,\ldots ,r-1.$
\end{point}

\begin{lemma}
\label{quaso}
    Let $f:\Z_{\geq 0}\to \Z_{\geq 0}$. Then the following are equivalent:
    \begin{enumerate}
        \item $f$ is a quasi-polynomial type with period 2.
        \item $\sum_{n\geq 0}f(n)z^n=\dfrac{h(z)}{(1-z^2)^c}$ for some $h(z)\in \Z[z]$ and $c\geq 0.$
    \end{enumerate}
    Moreover if $P_0,P_1\in \mathbb{Q}[X]$ are polynomials such that $f(2m+i)=P_i(m)$ for all $m\gg0$ and $i=0,1$ then deg $P_i\leq c-1$. Set deg $f=\text{max}\{\text{deg } P_0,\text{deg }P_1\}$
\end{lemma}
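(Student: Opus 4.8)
The plan is to reduce the period-$2$ statement to the classical one-variable correspondence between eventually polynomial sequences and rational generating functions with denominator a power of $(1-w)$, by separating the even- and odd-indexed terms through the substitution $w=z^2$. Write $F(z)=\sum_{n\ge 0}f(n)z^n$ and split it as $F(z)=G_0(z^2)+zG_1(z^2)$, where $G_0(w)=\sum_{m\ge 0}f(2m)w^m$ and $G_1(w)=\sum_{m\ge 0}f(2m+1)w^m$. Dually, any $h(z)\in\Z[z]$ decomposes uniquely as $h(z)=h_0(z^2)+zh_1(z^2)$ with $h_0,h_1\in\Z[z]$ (collecting the even- and odd-degree monomials), and $(1-z^2)^{-c}$ involves only even powers of $z$; thus the single identity $F(z)=h(z)/(1-z^2)^c$ is equivalent to the pair of one-variable identities $G_i(w)=h_i(w)/(1-w)^c$ for $i=0,1$.

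The one-variable facts I would record first are: for $g\colon\Z_{\ge 0}\to\Z$, the generating function $\sum_m g(m)w^m$ equals $N(w)/(1-w)^c$ for some $N\in\Z[w]$ if and only if $g$ agrees with a polynomial of degree $\le c-1$ for all $m\gg 0$. The forward direction follows from the partial fraction expansion $N(w)/(1-w)^c=Q(w)+\sum_{j=1}^{c}a_j/(1-w)^j$ with $Q\in\Q[w]$, together with the identity $(1-w)^{-j}=\sum_{m\ge 0}\binom{m+j-1}{j-1}w^m$ in which $\binom{m+j-1}{j-1}$ is a polynomial in $m$ of degree $j-1$; hence for $m$ beyond $\deg Q$ the coefficient of $w^m$ equals $\sum_{j}a_j\binom{m+j-1}{j-1}$, a polynomial in $m$ of degree $\le c-1$. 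For the converse, if $g(m)=P(m)$ for $m\ge m_0$ with $\deg P\le c-1$, I would use that $\sum_{m\ge 0}P(m)w^m=R(w)/(1-w)^{\deg P+1}$ for a suitable $R\in\Q[w]$ (the binomials $\binom{m+j}{j}$, $0\le j\le \deg P$, span the polynomials of degree $\le\deg P$) and absorb the finitely many corrections $g(m)-P(m)$, $m<m_0$, into the numerator. Clearing to the common denominator $(1-w)^c$ keeps the numerator in $\Z[w]$: it is simultaneously a polynomial in $\Q[w]$ and the power series $(1-w)^c\sum_m g(m)w^m$ with integer coefficients, since $(1-w)^c$ is monic with integer coefficients.

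With these in hand, $(1)\Rightarrow(2)$ follows by applying the converse direction to the sequences $m\mapsto f(2m)$ and $m\mapsto f(2m+1)$ with $c=\max\{\deg P_0,\deg P_1\}+1$, obtaining $G_i(w)=h_i(w)/(1-w)^c$ with $h_i\in\Z[w]$, and then reassembling $h(z)=h_0(z^2)+zh_1(z^2)\in\Z[z]$. The implication $(2)\Rightarrow(1)$ follows by applying the forward direction to each $G_i(w)=h_i(w)/(1-w)^c$, giving polynomials $P_0,P_1$ of degree $\le c-1$ with $f(2m+i)=P_i(m)$ for $m\gg 0$; the very same forward direction yields the ``moreover'' degree bound $\deg P_i\le c-1$.

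The main obstacle, such as it is, is purely bookkeeping: ensuring that $h(z)$ may be taken with integer rather than merely rational coefficients, which I handle by the monicity observation above, and by keeping the even/odd split $h=h_0(z^2)+zh_1(z^2)$ consistent throughout the two substitutions. The only genuinely substantive ingredient is the partial-fraction computation underlying the degree bound.
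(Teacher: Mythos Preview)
Your proof is correct. The paper actually states this lemma without proof, treating it as a standard fact, so there is no argument to compare against; your reduction via the even/odd splitting $F(z)=G_0(z^2)+zG_1(z^2)$ to the classical one-variable correspondence between eventually polynomial sequences and rational functions with denominator $(1-w)^c$ is exactly the natural way to supply the details. One cosmetic quibble: $(1-w)^c$ is not literally monic (its leading coefficient is $(-1)^c$), but all you actually use in the integrality step is that it lies in $\Z[w]$, so the argument stands.
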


\begin{corollary}
\label{deg}
     Let $f:\Z_{\geq 0}\to \Z_{\geq 0}$. If the function $g(n)=f(n)-f(n-2)$ is a quasi-polynomial type with period 2 then so is $f$. Furthermore deg $f=\text{deg }g+1$.
\end{corollary}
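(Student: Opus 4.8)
The plan is to pass to generating functions for the existence statement and then handle the degree claim via finite differences. First I would record the boundary convention $f(n)=0$ for $n<0$, so that $g(0)=f(0)$, $g(1)=f(1)$, and $g(n)=f(n)-f(n-2)$ for $n\ge 2$. Writing $F(z)=\sum_{n\ge 0}f(n)z^n$ and $G(z)=\sum_{n\ge 0}g(n)z^n$, a direct rearrangement of the defining relation (splitting off the $z^2$-shifted copy of $F$) gives $G(z)=(1-z^2)F(z)$, hence $F(z)=G(z)/(1-z^2)$.

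Since $g$ is a quasi-polynomial type with period $2$, Lemma \ref{quaso} supplies $h(z)\in\Z[z]$ and $c\ge 0$ with $G(z)=h(z)/(1-z^2)^c$. Dividing by $1-z^2$ yields $F(z)=h(z)/(1-z^2)^{c+1}$, and the converse direction of Lemma \ref{quaso} then shows that $f$ is itself a quasi-polynomial type with period $2$. This settles the first assertion.

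For the degree I would argue componentwise. Set $f_j(m)=f(2m+j)$ and $g_j(m)=g(2m+j)$ for $j=0,1$; the defining relation becomes $g_j(m)=f_j(m)-f_j(m-1)$, i.e. $g_j$ is the first difference of $f_j$. By hypothesis $g_j(m)=Q_j(m)$ for $m\gg 0$ with $Q_j$ a polynomial, and $\deg g=\max\{\deg Q_0,\deg Q_1\}$. The standard summation fact — if the first difference of a sequence eventually coincides with a polynomial of degree $e\ge 0$, then the sequence eventually coincides with a polynomial of degree $e+1$ (since $\sum_{k\le m}k^{e}$ is a polynomial of degree $e+1$ in $m$) — produces polynomials $P_j$ with $f_j(m)=P_j(m)$ for $m\gg 0$ and $\deg P_j=\deg Q_j+1$. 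Taking the maximum over $j$ gives $\deg f=\deg g+1$.

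The main obstacle is degree bookkeeping at the boundary of the convention rather than any deep difficulty. The summation step raises the degree by exactly one only when the difference polynomial $Q_j$ is nonzero; if some $Q_j$ vanishes identically (so $f_j$ is eventually constant) one must check that the corresponding $P_j$ does not govern the maximum. This holds because the other component then realizes $\deg g\ge 0$, forcing $\deg P_j\le \deg g+1$, so the maximum is still attained by the nonzero component. I would also reconcile this with the convention of Lemma \ref{quaso}, under which an eventually-zero component has degree $-1$; keeping the same convention on both sides ensures the displayed identity $\deg f=\deg g+1$ is read consistently.
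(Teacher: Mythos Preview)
The paper states Corollary~\ref{deg} without proof, treating it as an immediate consequence of Lemma~\ref{quaso}; there is no argument to compare against. Your proof is correct and is exactly the natural one: the generating-function identity $F(z)=G(z)/(1-z^2)$ combined with Lemma~\ref{quaso} gives the quasi-polynomial statement, and the parity-by-parity first-difference argument gives the degree claim. Your discussion of the boundary case where one $Q_j$ vanishes is adequate for the uses in the paper (where in fact both components of $g$ have the same nonnegative degree); the only residual edge case is when \emph{both} $Q_j$ vanish identically, in which case the identity $\deg f=\deg g+1$ can fail under the convention $\deg(0)=-\infty$, but this situation never arises in the applications.
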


\begin{point}
\label{Thm:5.2}
Eisenbud operators \\
\normalfont
Let $Q$ be a local ring and $\textbf{f}=f_1,\ldots ,f_c$ be a $Q$-regular sequence. Set $A=Q/(\textbf{f}).$ The Eisenbud operators (see \cite{Eisenbud}) are constructed as follows:

Let $\F:\cdots F_{i+2}\xrightarrow{\partial}F_{i+1}\xrightarrow{\partial}F_i\ldots$ be a complex of free $A$-modules.\\
(a) Choose a sequence of free $Q$-modules $\wt{\F_i}$ and maps $\wt{\partial}$ between them $$\wt{\F}:\cdots \wt{F}_{i+2}\xrightarrow{\wt{\partial}}\wt{F}_{i+1}\xrightarrow{\wt{\partial}}\wt{F}_i\ldots$$ such that $\F=\wt{\F}\otimes A$.\\
(b) Since $\wt{\partial}^2\equiv 0$ modulo$(f_1,\ldots ,f_c),$ we may write  $\wt{\partial}^2=\sum_{j=1}^{c} f_j\wt{t_j}$ where $\wt{t_j}:\wt{F_i}\to \wt{F}_{i-2}$ are linear maps for all $i.$\\
(c) Define, for $j=1,\ldots ,c$ the map $t_j=t_j(Q,\textbf{f},\F)$ by $t_j=\wt{t_j}\otimes A.$

The operators $t_1,\ldots ,t_c$ are called Eisenbud operators associated to $\textbf{f}.$ It can be shown that
\begin{enumerate}
    \item $t_i$ are uniquely determined up to homotopy.
    \item $t_i,t_j$ commutes up to homotopy.
\end{enumerate}
\end{point}

\begin{point}
\label{Thm:5.3}
\label{5.33}
    \normalfont
    Let $R=A[t_1,\ldots ,t_c]$ be a polynomial ring over $A$ with variable $t_1,\ldots ,t_c$ of degree $2.$ Let $M$ be an $A$-module  and let $\F$ be a free resolution of $M$. So we get well defined  maps $$t_j:\Ext_A^n(M,k)\to \Ext_A^{n+2}(M,k) \text{ for all }1\leq j\leq c \text{ and all }n.$$  This turns
    $\operatorname{Ext}_A^*(M,k)=\bigoplus_{i\geq 0}\Ext_A^i(M,k)$ into a $R$-module, where $k$ is the residue field of $A.$

    Since $\m\subseteq \text{ann}(\Ext_A^i(M,k))$ for all $i\geq 0$ we get that $\operatorname{Ext}_A^*(M,k)$ is a $S=R/\m R=k[t_1,\ldots ,t_c]$-module.
\end{point}

\begin{point}
\label{Thm:5.4}
\normalfont
    In (\cite{Gulliksen}, 3.1), Gulliksen proved that if $\text{projdim}_Q(M)$ is finite then $\operatorname{Ext}_A^*(M,N)$ is finitely generated $R$-module for all  $A$-modules $N$. In (\cite{Avramov}, 3.10), Avramov proved a converse for $N=k.$ Note that if $\text{projdim}_Q(M)$ is finite then \, $\operatorname{ Ext}_A^*(M,k)$ is finitely generated graded $S$-module. Set $\textrm{cx}_A(M)=\text{dim}_S\operatorname{Ext}_A^*(M,k)$.
\end{point}

We will repeatedly use the following result. Refer to Theorem 3.7 in \cite{cxM} for the proof.
\begin{proposition}
\label{cxM}
    Let $(A,\m,k)$ be a complete intersection ring with infinite residue field and $M$ a maximal Cohen Macaulay $A$-module of complexity $\geq 2$. Set $M_i=\text{Syz}^A_i(M)$ the $i$-th sygyzy of $M$ and $S=k[t_1,\ldots ,t_c]$ (see \ref{5.33}). Then there exists $t\in S_2$ and $n_0\in \N$ such that we have surjective maps $M_{n+2}\xrightarrow{\alpha} M_{n}$ for all $n\geq n_0,$ where $\alpha$ is an induce map of $t$ and it satisfy the following properties:
    \begin{enumerate}
        \item For a fixed $n$, the maps $\Tor^A_i(\alpha, A/\m^n)$ is surjective for all $i\geq 1.$
        \item Set $K_n(t)=\text{ker}(\alpha)$ for each $n\geq n_0.$ $\operatorname{cx}(K_n(t))=\cx(M)-1$ for all $n\geq n_0.$ So we have the following short exact sequences $0\to K_n(t)\to M_{n+2}\to M_{n}\to 0$ for all $n\gg0$ (say $n\geq c$).
    \end{enumerate}
     Further, note that $K_{n+1}(t)=\text{Syz}^A_1(K_n(t))$ for all $n\geq c.$
\end{proposition}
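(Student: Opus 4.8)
The plan is to realize every $\alpha_n$ through a single Eisenbud operator acting on a fixed minimal free resolution, and to read off all four assertions from the splitting behaviour of that operator; the one genuinely delicate point will be the uniform‑in‑$j$ genericity needed for the $\Tor$ statement.

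\textbf{Setup and surjectivity of $\alpha$.} First I would write $A=Q/(\mathbf f)$ with $Q$ regular local and $\mathbf f=f_1,\dots,f_c$ a regular sequence (legitimate after the completion base change), and fix a minimal free resolution $\F$ of $M$, so that $\F_{\ge n}$ is a minimal resolution of $M_n$. An element $t=\sum_j\lambda_j t_j\in S_2$ gives a degree $-2$ chain endomorphism of $\F$ and hence a map $\alpha_n\colon M_{n+2}\to M_n$ on syzygies. By minimality $M_m\otimes k=F_m\otimes k$, so $\alpha_n\otimes k$ is the reduction $\bar t\colon F_{n+2}\otimes k\to F_n\otimes k$, whose $k$-dual is multiplication $t\colon E_n\to E_{n+2}$ on $E:=\Ext_A^*(M,k)$. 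Thus $\alpha_n$ is surjective iff $t\colon E_n\to E_{n+2}$ is injective. Since $E$ is a finitely generated graded $S$-module with $\dim_S E=\cx(M)\ge 2$ and $k$ is infinite, prime avoidance lets me pick $t\in S_2$ outside every associated prime of $E$ except the irrelevant ideal $\mathfrak m_S=(t_1,\dots,t_c)$; then $(0:_E t)$ has finite length, so $t$ is injective in all degrees $\ge n_0$ and $\alpha_n$ is surjective for $n\ge n_0$.

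\textbf{The kernel complex, assertion (2) and the syzygy identity.} Next, fixing $n\ge n_0$, I observe that for every $p\ge 0$ the component $t\colon F_{n+2+p}\to F_{n+p}$ is a surjection of free modules (its reduction mod $\m$ is onto, by injectivity of $t$ on $E$ in degrees $\ge n_0$), hence split, with free kernel $G_p$. This produces a degreewise split exact sequence of free complexes
$$0\to G_\bullet\to \F_{\ge n+2}\to \F_{\ge n}\to 0,$$
and the homology sequence identifies $G_\bullet$ as a free resolution of $K_n=\ker\alpha_n$. Because each $G_{p-1}$ is a direct summand of $F_{n+1+p}$, one gets $\partial(G_p)\subseteq \m F_{n+1+p}\cap G_{p-1}=\m G_{p-1}$, so $G_\bullet$ is in fact \emph{minimal}. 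This reads off $\beta_p(K_n)=\beta_{n+2+p}(M)-\beta_{n+p}(M)$; since the Betti numbers of $M$ form a quasi-polynomial of degree $\cx(M)-1$ in each parity, this difference grows with degree $\cx(M)-2$, giving $\cx(K_n)=\cx(M)-1$. The depth lemma shows $K_n$ is maximal Cohen–Macaulay, and the identity $G^{(n+1)}_p=\ker\!\big(t\colon F_{n+3+p}\to F_{n+1+p}\big)=G_{p+1}$ exhibits the resolution of $K_{n+1}$ as the shift $(G_\bullet)_{\ge 1}$, whence $K_{n+1}=\operatorname{Syz}^A_1(K_n)$.

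\textbf{The $\Tor$-surjectivity (assertion (1)) and the main obstacle.} Finally, using $\Tor_i^A(M_n,A/\m^j)\cong \Tor_{i+n}^A(M,A/\m^j)$ for $i\ge 1$, I would identify $\Tor_i^A(\alpha_n,A/\m^j)$ with multiplication by the cohomology operator $t$ on $\bigoplus_m\Tor_m^A(M,A/\m^j)$ (the case $i=0$ is just $\alpha_n$ tensored with $A/\m^j$, already surjective). So assertion (1) amounts to $t$ acting surjectively in high homological degree on $\bigoplus_m\Tor_m^A(M,A/\m^j)$, and the genuine difficulty is that this must hold for all $j$ at once. For each fixed $j$ the module $\bigoplus_m\Tor_m^A(M,A/\m^j)$ is finitely generated over $R=A[t_1,\dots,t_c]$ by Gulliksen's theorem, so eventual surjectivity of $t$ is a single generic condition, avoidable over an infinite field. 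I expect the crux to be satisfying this \emph{countable} family of conditions with one and the same $t$; this is exactly where the base change to the uncountable residue field $K=k((X))$ should be used, since a countable intersection of nonempty Zariski-dense conditions over an uncountable field is nonempty. Choosing $t$ in this intersection (and $n_0$ correspondingly large) then delivers (1) together with the previous assertions, everything else following formally from the splitting of $G_\bullet$.
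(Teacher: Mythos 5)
You cannot be checked against an in-paper argument here: the paper does not prove Proposition \ref{cxM}, it quotes it and defers to Theorem 3.7 of \cite{cxM}. Measured against that source, your outline is essentially the expected proof and is sound in its main steps: realizing $t\in S_2$ as a degree $-2$ chain map on a fixed minimal resolution; converting surjectivity of $\alpha_n$ into eventual injectivity of $t$ on $E=\Ext^*_A(M,k)$ via minimality and Nakayama, with $t$ chosen by graded prime avoidance so that $(0:_E t)$ has finite length; splitting the componentwise surjections of free modules to get the kernel complex $G_\bullet$, whose minimality (your computation $\partial(G_p)\subseteq \m F_{n+1+p}\cap G_{p-1}=\m G_{p-1}$ is correct since $G_{p-1}$ is a direct summand) yields $\beta_p(K_n)=\beta_{n+2+p}(M)-\beta_{n+p}(M)$, hence $\cx(K_n)=\cx(M)-1$ by Avramov's quasi-polynomial theorem, and $K_{n+1}=\operatorname{Syz}^A_1(K_n)$ from the shift identity on kernels; and identifying $\Tor_i(\alpha,A/\m^j)$ for $i\geq 1$ with the operator action on $\Tor^A_*(M,A/\m^j)$.

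Two repairs are needed in the final part. First, $\bigoplus_m\Tor^A_m(M,A/\m^j)$ is \emph{not} finitely generated over $R=A[t_1,\ldots,t_c]$: the operators lower homological degree, so Gulliksen's finiteness theorem does not apply to it directly. The standard fix is Matlis duality: since $A/\m^j$ has finite length, $\Tor_i^A(M,A/\m^j)^\vee\cong \Ext^i_A(M,(A/\m^j)^\vee)$ where $(-)^\vee=\Hom_A(-,E_A(k))$, and the latter total Ext module is finitely generated over $R$ by Gulliksen; moreover it is killed by $\m^j$, so its associated primes contain $\m$ and your avoidance condition really does take place in $S_2\cong k^c$, as your argument requires. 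Eventual injectivity of $t$ there dualizes to the eventual surjectivity you want, with threshold depending on $j$ — note your parenthetical ``$n_0$ correspondingly large'' cannot be uniform in $j$, but this matches how assertion (1) is actually used (Lemma \ref{exact} takes a maximum over the finitely many powers $n\leq \operatorname{red}(A)$). Second, your countable-avoidance mechanism proves the statement when the residue field is uncountable (or after the base change $A'=A[[X]]_S$ of Section 2 — in which case $t$, $\alpha$ and $K_n$ live over $A'$, which is harmless for every application in this paper but is weaker than the literal claim for an arbitrary infinite residue field). To get the stated hypothesis one can instead invoke the finite generation of the bigraded module $\bigoplus_{i,n}\Ext^i_A(M,A/\m^{n+1})$ over $\R[t_1,\ldots,t_c]$ (Puthenpurakal, \emph{On the finite generation of a family of Ext modules}), which compresses your countably many genericity conditions into finitely many proper subspaces of $S_2$, so ordinary prime avoidance over an infinite field suffices.
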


\section{Degree of Quasi-polynomials}
Let $F$ be a minimal resolution of $M$ over A. Set $M_i=\text{syz}_i^A(M)$ for all $i\geq 1$ and $M_0=M.$ In this section, we discuss the behavior of Hilbert coefficients of $M_i$ for all $i\gg0.$
\begin{theorem}[\cite{Avramov}, Theorem 4.1]
Let $(Q,\n)$ is regular local ring and $f_1,\ldots ,f_c \in \n^2$ be $Q$-regular sequence.  Set $(A,\m)=(Q/(f_1,\ldots ,f_c), \overline{\n})$ be a complete intersection ring. Let $M$ be a maximal Cohen Macaulay $A$-module. Then $\lim_{n\to \infty}\beta_i(M)/n^{\operatorname{cx}(M)-1}$ exists.
\end{theorem}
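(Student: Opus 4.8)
The plan is to translate the statement about Betti numbers into a statement about the Hilbert function of the module of cohomology operators and then induct on the complexity. After the base changes recorded in the Preliminaries we may assume that $k$ is infinite (indeed uncountable), so that Proposition \ref{cxM} and generic choices of linear forms are available. Recall from \ref{Thm:5.3} and \ref{Thm:5.4} that $E:=\operatorname{Ext}_A^*(M,k)$ is a finitely generated graded module over $S=k[t_1,\ldots,t_c]$ with $\deg t_j=2$, that $\beta_n(M)=\dim_k\operatorname{Ext}_A^n(M,k)=H(E,n)$, and that $\cx(M)=\dim_S E$. If $\cx(M)=0$ then $\beta_n(M)=0$ for $n\gg 0$ and the limit is $0$, so I may assume $\cx(M)\geq 1$.

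First I would record the qualitative shape of the Betti sequence. Splitting $E=E^{\mathrm{ev}}\oplus E^{\mathrm{odd}}$ into even and odd internal degrees and regrading $S$ to the standard polynomial ring (each $t_j$ in degree $1$) exhibits $E^{\mathrm{ev}}$ and $E^{\mathrm{odd}}$ as finitely generated graded modules whose Hilbert functions are $m\mapsto\beta_{2m}(M)$ and $m\mapsto\beta_{2m+1}(M)$. By Lemma \ref{quaso} the function $n\mapsto\beta_n(M)$ is then of quasi-polynomial type with period $2$, say $\beta_{2m+j}(M)=P_j(m)$ for $m\gg 0$ and $j=0,1$, with $\deg P_j\leq\cx(M)-1$. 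Consequently the even and odd subsequences of $\beta_n(M)/n^{\cx(M)-1}$ each converge, and the full limit exists \emph{if and only if} $P_0$ and $P_1$ have the same degree $\cx(M)-1$ and the same leading coefficient; this is the only thing left to prove.

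For this equality I would induct on $c':=\cx(M)$. The inductive step uses Proposition \ref{cxM}: for $c'\geq 2$ choose $t\in S_2$ as there, giving short exact sequences $0\to K\to M_{n+2}\to M_n\to 0$ with $\cx(K)=c'-1$ and $K$ a syzygy module. Passing to $\operatorname{Ext}_A^*(-,k)$, and using that the induced map $M_{n+2}\to M_n$ realizes multiplication by $t$, this says that $E\xrightarrow{\ \cdot t\ }E(2)$ has kernel of finite length (the degrees of $(0:_E t)$ are bounded since it is finitely generated and $t$ is generic) and cokernel which, up to finite length, is the complexity-$(c'-1)$ module $\operatorname{Ext}_A^*(K,k)$. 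Hence for $n\gg 0$ the difference $g(n):=\beta_n(M)-\beta_{n-2}(M)$ equals the Hilbert function of this cokernel. By the induction hypothesis applied to $K$, the function $g$ is of quasi-polynomial type with period $2$, of degree $c'-2$, with both component polynomials sharing a leading coefficient $a$. By Corollary \ref{deg}, $\beta_n(M)$ is then of quasi-polynomial type with period $2$ and degree $c'-1$; and summing $g$ shows that $P_0$ and $P_1$ acquire the common leading coefficient $a/(c'-1)$, so the full limit exists and equals $a/\bigl((c'-1)\,2^{\,c'-1}\bigr)$.

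The base case $c'=1$ is where the real content sits and is the step I expect to be the main obstacle. Here $\dim_S E=1$; since a single vanishing Betti number would force finite projective dimension, both $E^{\mathrm{ev}}$ and $E^{\mathrm{odd}}$ are nonzero in all large degrees and hence each has dimension exactly $1$, so $P_0,P_1$ are constants, and I must show they are \emph{equal}. For this I would invoke that $\cx(M)=1$ forces the minimal free resolution to be eventually periodic of period $2$ (Eisenbud-type periodicity over complete intersections). A tail of the resolution is then an exact periodic complex of free modules; localizing at a minimal prime of $A$ (an Artinian local ring) and balancing lengths across two consecutive positions of the $2$-periodic exact complex forces $\beta_{2m}(M)=\beta_{2m+1}(M)$ for $m\gg 0$, i.e.\ $P_0=P_1$. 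Securing this structural input — the periodicity together with the clean identification of the cokernel of $\cdot t$ with $\operatorname{Ext}_A^*(K,k)$ modulo finite length — is the crux; the Hilbert-function bookkeeping around it is routine given Lemma \ref{quaso} and Corollary \ref{deg}.
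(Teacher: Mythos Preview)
The paper does not prove this statement at all: it is quoted verbatim as a result of Avramov (\cite{Avramov}, Theorem~4.1) and used as input for the rest of Section~3. There is therefore no ``paper's own proof'' to compare your proposal against.

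That said, your sketch is a reasonable independent argument, essentially re-deriving Avramov's conclusion from the later machinery of Proposition~\ref{cxM}. Two comments. First, be aware of the logical order: you are proving a 1989 theorem by invoking a 2021 structural result (Proposition~\ref{cxM}); this is fine provided the latter does not itself rely on Avramov's growth theorem, which you should check rather than assume. Second, your base case $\cx(M)=1$ leans on ``Eisenbud-type periodicity over complete intersections'' to get $M_{n+2}\cong M_n$; that module-level periodicity is exactly the delicate point and is not available from Proposition~\ref{cxM} (which is stated only for $\cx\geq 2$). You can sidestep it: once you know $\beta_{2m}\equiv p$ and $\beta_{2m+1}\equiv q$ for $m\gg 0$ (which follows purely from $\dim_S E=1$ and a generic $t\in S_2$ acting with finite-length kernel and cokernel), localize at a minimal prime $\mathfrak p$ and set $a_n=\ell((M_n)_{\mathfrak p})$. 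The short exact sequences give $a_{n+1}=\beta_n\,\ell(A_{\mathfrak p})-a_n$, hence $a_{2m}$ and $a_{2m+1}$ are eventually arithmetic progressions with common differences $(q-p)\ell(A_{\mathfrak p})$ and $(p-q)\ell(A_{\mathfrak p})$ respectively; nonnegativity of lengths forces $p=q$. This removes the appeal to periodicity and closes the base case cleanly.
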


\begin{remark}
    If $\operatorname{cx(M)}=1$ then the function $i\mapsto \beta_i(M)$ coincides with a nonzero constant polynomial for all $i\gg0$.
\end{remark}

\begin{theorem}
\label{e_0}
    Let $(A,\m)$ be a complete intersection ring and $I$ a $\m$-primary ideal. Let $M$ be a maximal Cohen Macaulay module. Then the function $i\mapsto e^I_0(M_i)$ is a quasi-polynomial type with period $2$ and degree $\operatorname{cx}(M)-1$ for all $i\gg0.$
\end{theorem}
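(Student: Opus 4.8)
The plan is to induct on $c' := \cx(M)$, reducing the increment of $i \mapsto e^I_0(M_i)$ at each stage to the same invariant for a module of complexity $c'-1$ by means of Proposition \ref{cxM}, and then to invoke Corollary \ref{deg}. By the base change discussion at the start of Section 2 I may assume $A$ is complete with uncountable (in particular infinite) residue field; this alters neither the Hilbert--Samuel function of any $M_i$ (hence no Hilbert coefficient), nor $\cx(M)$, nor the maximal Cohen--Macaulay property, so no generality is lost and Proposition \ref{cxM} becomes available. Since $A$ is Cohen--Macaulay and $M$ is maximal Cohen--Macaulay, every syzygy $M_i$ is maximal Cohen--Macaulay of dimension $d = \dim A$, so $e^I_0(M_i) > 0$ for all $i$.

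For the base case $c' = 1$ (the case $c'=0$ being trivial, as then $M$ is free and $M_i = 0$ for $i \geq 1$), I would apply additivity of multiplicity to the defining sequences $0 \to M_{i+1} \to F_i \to M_i \to 0$, where $F_i$ is free of rank $\beta_i(M)$. As all three modules have dimension $d$, this yields $e^I_0(M_{i+1}) + e^I_0(M_i) = \beta_i(M)\,e^I_0(A)$. By Avramov's theorem $\beta_i(M)$ equals a positive constant $b$ for $i \gg 0$, so $e^I_0(M_{i+1}) + e^I_0(M_i) = b\,e^I_0(A)$ is constant; subtracting consecutive instances gives $e^I_0(M_{i+2}) = e^I_0(M_i)$ for $i \gg 0$. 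Thus $i \mapsto e^I_0(M_i)$ is eventually a period-$2$ pattern of two positive constants, a quasi-polynomial of period $2$ and degree $0 = c'-1$.

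For $c' \geq 2$, assuming the statement for complexity $c'-1$, I would invoke Proposition \ref{cxM} to obtain $t \in S_2$, an integer $c$, and short exact sequences
$$0 \to K_n(t) \to M_{n+2} \to M_n \to 0 \qquad (n \geq c),$$
with $\cx(K_n(t)) = c'-1$ and $K_{n+1}(t) = \text{Syz}^A_1(K_n(t))$. The first task is to show each $K_n(t)$ is maximal Cohen--Macaulay of dimension $d$: the depth lemma applied to the sequence gives $\depth K_n(t) \geq \min\{\depth M_{n+2},\, \depth M_n + 1\} = d$, and $K_n(t) \neq 0$ since its complexity is positive. Hence all three modules share the dimension $d$, multiplicity is additive, and $e^I_0(M_{n+2}) = e^I_0(K_n(t)) + e^I_0(M_n)$. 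Writing $f(n) = e^I_0(M_n)$, this says $g(n) := f(n) - f(n-2) = e^I_0(K_{n-2}(t))$ for $n \geq c+2$. Because $K_n(t) = \text{Syz}^A_{n-c}(N)$ for $N := K_c(t)$ with $\cx(N) = c'-1$, the inductive hypothesis makes $m \mapsto e^I_0(\text{Syz}^A_m(N))$ a period-$2$ quasi-polynomial of degree $c'-2$; a constant shift of the argument preserves both properties, so $g$ is a period-$2$ quasi-polynomial of degree $c'-2$. Corollary \ref{deg} then gives that $f$ is a period-$2$ quasi-polynomial with $\deg f = \deg g + 1 = c'-1$, completing the induction.

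The main obstacle is the step underlying additivity of multiplicity, namely that $K_n(t)$ is maximal Cohen--Macaulay of the full dimension $d$; without this, the three multiplicities in the sequence need not add, and the crucial increment identity $g(n) = e^I_0(K_{n-2}(t))$ could fail. This reduces to the depth estimate above, so it is not deep, but it is the linchpin of the argument. Everything else is bookkeeping: the eventual constancy of Betti numbers for the base case, the parity shift, and the mechanical application of Corollary \ref{deg}.
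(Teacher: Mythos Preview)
Your proof is correct and follows essentially the same route as the paper: induct on $\cx(M)$, use Proposition~\ref{cxM} to get the sequences $0\to K_n\to M_{n+2}\to M_n\to 0$, apply additivity of $e^I_0$, and conclude via Corollary~\ref{deg}. The only difference is in the base case $\cx(M)=1$, where the paper simply invokes periodicity of the minimal resolution (after stripping free summands) to get $M_{2i+j}\cong M_j$, whereas you obtain $e^I_0(M_{i+2})=e^I_0(M_i)$ from the syzygy sequences and eventual constancy of $\beta_i(M)$; both arguments are equally short and valid.
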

\begin{proof}
    We may assume $A$ is complete and the residue field of $A$ is uncountable. We will prove this by induction on $\operatorname{cx}(M).$ Assume $\operatorname{cx}(M)=1.$ We may assume $M$  has no free summand. Since $M$ is a maximal Cohen Macaulay module so $M$ has periodic resolution of period $2$, $i.e.$ $\text{Syz}^A_{2i+j}(M)=\text{Syz}^A_j(M)$ for $j=0,1$. Therefore $e^I_0(M_{2i})=e^I_0(M)\neq 0$ and $e^I_0(M_{2i+1})=e^I_0(M_1)\neq 0.$ Hence the statement is true for $\operatorname{cx}(M)=1$.

    Let us assume it is true for $\operatorname{cx}(M)=r-1.$ Let $\operatorname{cx}(M)=r.$ By \ref{cxM}, we have a short exact sequence $$0\to K_i \to M_{i+2}\to M_i\to 0$$ for all $i\gg0$ (say $i\geq j_0$) with $\operatorname{cx}(K_{j_0})=\operatorname{cx}(M)-1$. Note that $K_j\cong \text{Syz}^A_{j-j_0}(K_{j_0}).$ Since multiplicity is additive on short exact sequence we get  $e^I_0(M_{i+2})-e^I_0(M_{i})=e^I_0(K_{i})$ for all $i\gg0$. By the induction hypothesis, the function $i\mapsto e^I_0(K_{i})$ is a quasi-polynomial of degree $\cx(M)-2$ for sufficiently large $i$. Therefore, the function $i\mapsto e^I_0(M_{i})$ is a quasi-polynomial of degree $\operatorname{cx}(M)-1$ for sufficiently large $i$ (see \ref{deg}).
\end{proof}

\begin{lemma}
\label{exact}
     Let $(Q,\n)$ is regular local ring and $f_1,\ldots ,f_c \in \n^2$ be $Q$-regular sequence.  Set $(A,\m)=(Q/(f_1,\ldots ,f_c), \overline{\n})$ be a complete intersection ring with an uncountable residue field of dimension $1$. Let $M$ be a maximal Cohen Macaulay $A$-module. Then $$0\to L(K_i)\to L(M_{i+2})\to L(M_i)\to 0$$ for all $i\gg0$ (for definition of $K_n$, see \ref{cxM}).
\end{lemma}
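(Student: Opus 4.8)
The plan is to take the short exact sequence of $A$-modules supplied by Proposition \ref{cxM},
$$
0\to K_i\to M_{i+2}\to M_i\to 0,
$$
valid for all $i\gg0$, and to apply the functor $L(-)$ defined in \ref{hpolyn} to obtain the asserted sequence of Rees-type modules. Recall that $L(N)=\bigoplus_{n\geq0}N/\m^{n+1}N$, so for each fixed $n$ the degree-$n$ strand of the candidate sequence is
$$
K_i/\m^{n+1}K_i\to M_{i+2}/\m^{n+1}M_{i+2}\to M_i/\m^{n+1}M_i\to 0,
$$
which is just the original sequence tensored with $A/\m^{n+1}$. Right-exactness of $-\otimes_A A/\m^{n+1}$ gives exactness at the middle and right terms for free; the entire content of the lemma is establishing exactness on the \emph{left}, i.e.\ that the induced map $L(K_i)\to L(M_{i+2})$ is injective in every degree.

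The key step is therefore to show that the connecting $\Tor$ term vanishes. Tensoring $0\to K_i\to M_{i+2}\to M_i\to 0$ with $A/\m^{n+1}$ yields the long exact sequence
$$
\Tor^A_1(M_{i+2},A/\m^{n+1})\to \Tor^A_1(M_i,A/\m^{n+1})\to K_i/\m^{n+1}K_i\to M_{i+2}/\m^{n+1}M_{i+2},
$$
so left-exactness at $L(K_i)$ is equivalent to the surjectivity of
$$
\Tor^A_1(M_{i+2},A/\m^{n+1})\to \Tor^A_1(M_i,A/\m^{n+1})
$$
for every $n$. But this is exactly conclusion (1) of Proposition \ref{cxM}: the map $\alpha\colon M_{i+2}\to M_i$ is chosen so that $\Tor^A_j(\alpha,A/\m^n)$ is surjective for all $j\geq1$ and all $n$. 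Assembling the degree-$n$ strands into the graded pieces, and using that the maps are $\R$-linear by construction of the $\R$-module structure in \ref{hpolyn}, produces the desired short exact sequence $0\to L(K_i)\to L(M_{i+2})\to L(M_i)\to 0$ of $\R$-modules.

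The main obstacle is bookkeeping rather than any deep difficulty: one must verify that the surjectivity statement in \ref{cxM}(1), which is phrased for a fixed $n$, can be applied uniformly across all $n$ to conclude vanishing of the connecting homomorphism in each graded degree simultaneously, and that the resulting maps respect the graded $\R$-structure so that the sequence is genuinely a sequence of graded $\R$-modules. Since $M$ has complexity $\geq2$ (as is implicit in invoking \ref{cxM}, whose hypotheses require $\cx(M)\geq2$), the range $i\gg0$ is exactly the range where Proposition \ref{cxM} applies, so no additional constraints on $i$ arise beyond those already present.
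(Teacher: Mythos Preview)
Your overall strategy is the same as the paper's: tensor the short exact sequence $0\to K_i\to M_{i+2}\to M_i\to 0$ with $A/\m^{n+1}$ and deduce left-exactness from surjectivity of $\Tor^A_1(M_{i+2},A/\m^{n+1})\to \Tor^A_1(M_i,A/\m^{n+1})$. The gap is in the step you label ``bookkeeping.'' Proposition~\ref{cxM}(1) gives, for each \emph{fixed} power $\m^n$, surjectivity of the induced Tor map once the syzygy index exceeds some threshold \emph{that depends on $n$}. You need a single threshold on $i$ that works for \emph{all} $n$ simultaneously, and Proposition~\ref{cxM} does not hand you this; a priori the thresholds could go to infinity with $n$. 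This is precisely where the hypotheses $\dim A=1$ and ``uncountable residue field'' enter, and your argument uses neither.

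The paper closes this gap as follows. Using the uncountable residue field one chooses $x\in\m$ that is simultaneously $M_i\oplus A$-superficial for all $i\geq 0$. Since $\dim A=1$, a result from \cite{Quasipolynomial} (Lemma~4.1 there) shows that multiplication by $x$ induces isomorphisms $\Tor^A_1(M_i,A/\m^n)\xrightarrow{\ \sim\ }\Tor^A_1(M_i,A/\m^{n+1})$ for all $n\geq\operatorname{red}(A)$. Thus only finitely many values of $n$ (namely $1\leq n\leq\operatorname{red}(A)$) need a threshold from Proposition~\ref{cxM}(1); take $j_0$ to be the maximum of these finitely many thresholds. For $i\geq j_0$ one then has surjectivity for $n\leq\operatorname{red}(A)$ directly, and for larger $n$ one propagates it through the commutative square with vertical isomorphisms $\overline{\alpha}_x$. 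This is a genuine argument, not bookkeeping, and without it the uniformity claim is unjustified.
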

\begin{proof}
     Let $x\in \m$ be a $M_i\oplus A$-superficial element for all $i\geq 0$. By \ref{cxM}, for each fixed $n$ we have the following surjective map
     \begin{equation}
     \label{equation}
         \operatorname{Tor}_1(M_{i+2},A/\m^n)\to \operatorname{Tor}_1(M_i,A/\m^n)\to 0
     \end{equation}
      for all $i\gg0$ (say $i\geq j_{n_0}$) and by (\cite{Quasipolynomial}, Lemma 4.1), the map  $\overline{\alpha}_x:\operatorname{Tor}_1(M_i, A/\m^n)\to  \operatorname{Tor}_1(M_i, A/\m^{n+1})$ is an isomorphism for all  $n\geq \operatorname{red}(A),$ where $\overline{\alpha}_x$ is an induced map of $\alpha_x:A/\m^n\to A/\m^{n+1}$ defined by $\alpha_x(a+\m^n)=ax+\m^{n+1}.$ For $i\geq j_0=\text{max}\{j_{n_0}, \text{ for all }1\leq n\leq \text{red}(A)\}$, we now have the following commutative diagram
\begin{equation*}
    \begin{tikzcd}
        \operatorname{Tor}_1(M_{i+2},A/\m^n) \arrow{r}{} \arrow{d}{\overline{\alpha}_x} & \operatorname{Tor}_1(M_i,A/\m^n) \arrow{r}{} \arrow{d}{\overline{\alpha}_x} & 0 \\
        \operatorname{Tor}_1(M_{i+2},A/\m^{n+1}) \arrow{r}{} & \operatorname{Tor}_1(M_i,A/\m^{n+1}) \arrow[dashed]{r}{} & 0,
    \end{tikzcd}
\end{equation*}
 since $\overline{\alpha}_x$ is isomorphic and by equation \ref{equation}, we obtained the following map $\operatorname{Tor}_1(M_{i+2},A/\m^{n+1}) \to  \operatorname{Tor}_1(M_i,A/\m^{n+1})$ is surjective. By proceeding inductively we get $$0\to L(K_i)\to L(M_{i+2})\to L(M_i)\to 0$$ for all $i\gg0.$

\end{proof}

For convenience, set deg$(0)=-\infty$.
\begin{theorem}
\label{e1}
     Let $(A,\m)$ be a complete intersection ring of codimension $c\geq 2$ and dimension $d\geq 1$. Let $M$ be a maximal Cohen Macaulay $A$-module. Set $M_i=\operatorname{syz}^A_i(M)$ for all $i\geq 0.$ Then the function $i\mapsto e_1(M_i)$ is a quasi-polynomial type with period $2$ and degree $\operatorname{cx}(M)-1$ for all $i\gg0.$
\end{theorem}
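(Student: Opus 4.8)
The plan is to reduce to dimension one and then induct on $\operatorname{cx}(M)$, using Lemma \ref{exact} to turn the short exact sequences of Proposition \ref{cxM} into an \emph{additivity} statement for $e_1$, and closing each inductive step with Corollary \ref{deg}, which raises the degree by exactly one.

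First I would carry out the standard reductions. After the base changes of Section 2 I may assume $A$ is complete with uncountable residue field. Since there are only countably many syzygies $M_i$ while the residue field is uncountable, I can choose a single sequence $x_1,\dots,x_{d-1}$ that is $A\oplus\bigoplus_i M_i$-superficial; each $M_i$ has positive depth (it is maximal Cohen--Macaulay of dimension $d\geq 1$), so these elements are $M_i$-regular and cutting by them commutes with taking syzygies, giving $N_i:=\operatorname{Syz}^B_i(N)\cong M_i/JM_i$, where $B=A/J$, $J=(x_1,\dots,x_{d-1})$ and $N=M/JM$. A quotient of a complete intersection by a regular sequence is again a complete intersection, so $B$ is a complete intersection of dimension $1$; by \ref{hcs} the module $N$ is maximal Cohen--Macaulay, and since we cut by only $d-1=\dim M-1$ elements, \ref{hcs}(4) gives $e_1(M_i)=e_1(N_i)$ for every $i$. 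As complexity is preserved under factoring out a regular element, $\operatorname{cx}_B(N)=\operatorname{cx}_A(M)$. Hence it suffices to treat the case $d=1$.

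Now assume $d=1$ and induct on $r=\operatorname{cx}(M)$. If $r=1$, then $M$ has a periodic minimal resolution of period $2$, so $e_1(M_{2i+j})=e_1(M_j)$ is eventually constant for $j=0,1$, a quasi-polynomial of degree $0=r-1$ (here one must check that at least one of the two constants is nonzero in order to get the degree \emph{exactly}, which is the one delicate point of the base case). For the inductive step take $r\geq 2$. Proposition \ref{cxM} supplies short exact sequences $0\to K_i\to M_{i+2}\to M_i\to 0$ for $i\gg0$, with $K_i\cong\operatorname{Syz}^A_{i-c}(K_c)$ and $\operatorname{cx}(K_c)=r-1$; being a first syzygy of a maximal Cohen--Macaulay module over the Gorenstein ring $A$, each $K_i$ is itself maximal Cohen--Macaulay of dimension $1$. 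By Lemma \ref{exact} these lift to exact sequences $0\to L(K_i)\to L(M_{i+2})\to L(M_i)\to 0$ of $\mathcal{R}$-modules. Comparing Hilbert series degree by degree and using $H(L(M_i),z)=h_{M_i}(z)/(1-z)^2$ for each of the three dimension-one modules (see \ref{hpolyn}) gives the additivity of $h$-polynomials $h_{M_{i+2}}(z)=h_{M_i}(z)+h_{K_i}(z)$; differentiating at $z=1$ yields the clean relation $e_1(M_{i+2})-e_1(M_i)=e_1(K_i)$.

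To close the induction, set $f(n)=e_1(M_n)$, so that the relation above reads $f(n)-f(n-2)=e_1(K_{n-2})$. Since $\{K_j\}$ are the syzygies of the complexity-$(r-1)$ module $K_c$, the induction hypothesis makes $n\mapsto e_1(K_{n-2})$ a quasi-polynomial of period $2$ and degree $r-2$; Corollary \ref{deg} then promotes $f$ to a quasi-polynomial of period $2$ with $\deg f=(r-2)+1=r-1=\operatorname{cx}(M)-1$. The main obstacle, and the genuine content of the argument, is the additivity $e_1(M_{i+2})-e_1(M_i)=e_1(K_i)$: the coefficient $e_1$ is \emph{not} additive on arbitrary short exact sequences, and it is precisely the exactness at the level of the Rees-type modules $L(-)$ furnished by Lemma \ref{exact} that forces the $h$-polynomials, hence every $e_j$, to add. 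The only other point requiring care is obtaining the degree exactly rather than merely as an upper bound, and this is handled uniformly by the ``$\deg f=\deg g+1$'' clause of Corollary \ref{deg}, once the nonvanishing in the base case is settled.
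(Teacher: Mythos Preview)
Your overall strategy---reduce to dimension one, induct on complexity, use Lemma~\ref{exact} to get additivity of $e_1$ via the $L(-)$ sequence, and close with Corollary~\ref{deg}---is exactly the paper's approach, and your inductive step is essentially verbatim what the paper does.

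The one genuine gap is the base case. You correctly identify that for $\operatorname{cx}(M)=1$ one must show the constants $e_1(M_j)$ ($j=0,1$) are not both zero, and you call this ``the one delicate point,'' but you do not indicate how to prove it. This is precisely where the hypothesis $c\geq 2$ enters, and without it the statement is false (over a hypersurface one can have $e_1(M_i)=0$ for all $i$). The paper's argument is: after reducing to $d=1$, pick $x\in\m\setminus\m^2$ superficial. By \ref{hcs}(7) one has $e_1(M)\geq e_0(M)-\mu(M)\geq 0$, so $e_1(M)=0$ forces $e_0(M)=\mu(M)$, i.e.\ $M$ is Ulrich and $M/xM\cong k^{\mu(M)}$. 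Since $\operatorname{cx}(M)=1$, the minimal resolution of $M$ is periodic of period $2$; reducing mod $x$ gives a period-$2$ minimal resolution of $k^{\mu(M)}$ over $A/xA$, so $k$ itself has a periodic resolution. By Gulliksen's theorem this forces $A/xA$ to be a hypersurface, and since $x\in\m\setminus\m^2$, $A$ is a hypersurface too, contradicting $c\geq 2$. The same argument applied to $M_1$ gives $e_1(M_1)\neq 0$. In fact the paper shows \emph{both} constants are nonzero, though as you note, one nonzero constant already suffices for $\deg f=0$ under the convention $\deg f=\max\{\deg P_0,\deg P_1\}$.
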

\begin{proof}
We may assume $A$ is complete and the residue field of $A$ is uncountable. Let $x_1,\ldots ,x_{d}$ be $(M_i\oplus A)$-superficial sequence for all $i\geq 0$. By  (\cite{Quasipolynomial}, Lemma 2.2,) and \ref{hcs}, we have the dimension of $M_i/(x_1,\ldots ,x_{d-1})M_i$ is one for all $i\geq 0$ and $e_1(M_i)=e_1(M_i/(x_1,\ldots ,x_{d-1}))$. So it suffices to show for $d=1.$

    Let $x\in \m\setminus \m^2$ be $(M\oplus A)$-superficial element. By \ref{hcs}, $e_1(M)\geq e_0(M)-\mu(M)\geq 0$. If $e_1(M)=0$ then $e_0(M)=\mu(M)$. This says that $M$ is a Ulrich module and $M/xM\cong k^{\mu(M)}$, where $k$ is residue field of $A.$

    Assume $\operatorname{cx}(M)=1$. Then $M$ has a periodic resolution of period $2.$ Suppose $e_1(M)=0$ then $M/xM\cong k^{\mu(M)}$. Let $F \to M$ be the minimal free resolution of $M$. Then $F/xF\to M/xM$ is a minimal free resolution of $M/xM\cong k^{\mu(M)}$, so $k$ has a periodic resolution of period $2$. Therefore by (\cite{Gulliksen}, Theorem 2.3), $A/xA$ is a hypersurface ring. As $x\in \m \setminus \m ^2$, $A$ is a hypersurface ring. This is a contradiction to the hypothesis. Similarly, we can show that $e_1(M_1)\neq 0.$ Hence the statement is true for $\operatorname{cx}(M)=1$.

     Let us assume it is true for $\operatorname{cx}(M)=r-1.$ Let $\operatorname{cx}(M)=r.$ By \ref{cxM}, we have a short exact sequence $$0\to K_i \to M_{i+2}\to M_i\to 0$$ for all $i\gg0$ (say $i\geq j_0$) with $\operatorname{cx}(K_{j_0})=\operatorname{cx}(M)-1$ and by Lemma \ref{exact}, we have the following short exact sequence $$0\to L(K_i)\to L(M_{i+2})\to L(M_i)\to 0$$ for all $i\gg0.$ This implies $e_1(M_{i+2})-e_1(M_{i})=e_1(K_{i})$ for all $i\gg0.$ By the induction hypothesis, the function $i\mapsto e_1(K_{i})$ is a quasi-polynomial of degree $\cx(M)-2$ for sufficiently large $i$. Therefore, the function $i\mapsto e_1(M_{i})$ is a quasi-polynomial of degree $\operatorname{cx}(M)-1$ for sufficiently large $i$ (see \ref{deg}).
\end{proof}

\section{Leading coefficients of quasi-polynomials}
In the previous section, we established that for a maximal Cohen-Macaulay module $M$ over ring $A$, the functions $i\mapsto e_0(M_i)$, and $i\mapsto e_1(M_i)$ are of quasi-polynomial type with period $2$ and degree $\operatorname{cx}(M)-1$ for all $i\gg0.$ Due to theorem $4.1$ in \cite{Avramov}, the function $i\mapsto \mu(M_i)$ is of quasi-polynomial type with period $2$ and degree $\operatorname{cx}(M)-1$ for all $i\gg0.$ In this section, we will explore the relationship between their leading coefficients.
\begin{lemma}

\label{cxM2}
    Let $(A,\m)$ be a complete intersection ring of codimension $c\geq 2$ and dimension $d\geq 1$. Let $M$ be a maximal Cohen Macaulay $A$-module. Then for $j=0,1$ $$\lim_{n\to \infty}\dfrac{e_1(M_{2n+j})}{n^{\operatorname{cx}(M)-1}}\geq \lim_{n\to \infty}\dfrac{e_0(M_{2n+j})}{n^{\operatorname{cx}(M)-1}}-\lim_{n\to \infty}\dfrac{\mu(M_{2n+j})}{n^{\operatorname{cx}(M)-1}},$$  where $\mu(M_n)=\text{number of minimal generators of } M_n=\beta_n(M).$ In addition, if equality holds for $\operatorname{cx}(M)=2$ then $G(K_n)$ is Cohen-Macaulay for all $n\gg0$ (for definition of $K_n$, see \ref{cxM}).
\end{lemma}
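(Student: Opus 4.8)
The plan is to prove the inequality by applying the classical Northcott-type bound pointwise and passing to leading coefficients, and then to extract the Cohen--Macaulayness of $G(K_n)$ from the rigidity that equality of the limits imposes on the first differences of the relevant quasi-polynomials.

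For the inequality I would start from \ref{hcs}(7), which gives $e_1(M_i)\geq e_0(M_i)-\mu(M_i)$ for every $i\geq 0$, since each $M_i$ is maximal Cohen--Macaulay, hence Cohen--Macaulay of positive dimension. By Theorem \ref{e_0}, Theorem \ref{e1} and Avramov's theorem (\cite{Avramov}, Theorem 4.1) the three functions $i\mapsto e_0(M_i)$, $i\mapsto e_1(M_i)$ and $i\mapsto \mu(M_i)$ are quasi-polynomials of period $2$ and degree $\operatorname{cx}(M)-1$; hence for each fixed parity $j\in\{0,1\}$ the limits $\lim_n e_t(M_{2n+j})/n^{\operatorname{cx}(M)-1}$ for $t=0,1$ and $\lim_n \mu(M_{2n+j})/n^{\operatorname{cx}(M)-1}$ all exist, being the leading coefficients of the associated polynomials. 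Dividing the pointwise inequality along the progression $i=2n+j$ by $n^{\operatorname{cx}(M)-1}$ and letting $n\to\infty$ yields the claim, the right-hand limit splitting as the difference of the two individual limits because each exists separately.

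For the equality statement assume $\operatorname{cx}(M)=2$. After the base changes of Section 2 I may assume the residue field is uncountable, so I can choose a single superficial sequence $x_1,\dots,x_{d-1}$ that is $(A\oplus M_i\oplus K_n)$-superficial for all $i$ and all $n$ at once; by \ref{hcs} this leaves $e_0$, $e_1$ and $\mu$ of every module unchanged and reduces the computation of $e_0,e_1$ to dimension one. By \ref{cxM} there are short exact sequences $0\to K_n\to M_{n+2}\to M_n\to 0$ with $\operatorname{cx}(K_n)=1$ and $K_{n+1}=\operatorname{Syz}^A_1(K_n)$, and a depth-lemma argument shows each $K_n$ is maximal Cohen--Macaulay of dimension $d$. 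I would then record three additivity relations valid for all $n\gg0$: additivity of $e_0$ is immediate from additivity of multiplicity on the above sequence; additivity of $e_1$ follows from the exact sequence $0\to L(K_n)\to L(M_{n+2})\to L(M_n)\to 0$ of Lemma \ref{exact}, which makes the $h$-polynomials additive in dimension one; and additivity of $\mu$, namely $\mu(M_{n+2})=\mu(M_n)+\mu(K_n)$, follows by tensoring with $k$ and noting that the connecting map $\Tor^A_1(M_n,k)\to K_n\otimes k$ vanishes, because $\Tor^A_1(\alpha,k)$ is surjective by the case $\m^1=\m$ of \ref{cxM}(1).

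Finally I would run the telescoping argument. Writing $e_1(M_{2n+j})=a_jn+b_j$, $e_0(M_{2n+j})=c_jn+d_j$ and $\mu(M_{2n+j})=p_jn+q_j$ for $n\gg0$ (these are degree-one quasi-polynomials since $\operatorname{cx}(M)=2$), the three additivity relations give $e_1(K_{2n+j})=a_j$, $e_0(K_{2n+j})=c_j$ and $\mu(K_{2n+j})=p_j$ for all large $n$; equivalently, the $\operatorname{cx}=1$ invariants $e_0(K_i)$, $e_1(K_i)$, $\mu(K_i)$ are eventually constant along each parity, with constant value equal to the leading coefficient of the corresponding $M$-quasi-polynomial, i.e.\ to the limit appearing in the statement. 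The hypothesis $a_j=c_j-p_j$ for $j=0,1$ then forces $e_1(K_i)=e_0(K_i)-\mu(K_i)$ for all sufficiently large $i$; since these coefficients are unchanged under the dimension reduction, this equality holds for the original $d$-dimensional $K_i$, and \ref{hcs}(8) applied to the maximal Cohen--Macaulay module $K_i$ gives that $G(K_i)$ has minimal multiplicity and is Cohen--Macaulay for all $i\gg0$. The step I expect to be the main obstacle is the additivity of $\mu$: it is the sole place where the Northcott inequality \ref{hcs}(7) is upgraded to an equality for $K_i$, it relies essentially on the surjectivity in \ref{cxM}(1) invoked precisely at the ideal $\m$, and it must be reconciled with the reduction (the short exact sequence stays exact after killing the superficial sequence because that sequence is regular on the maximal Cohen--Macaulay modules $M_n$, so $\Tor^A_{\geq 1}$ against $A/(x_1,\dots,x_{d-1})$ vanishes).
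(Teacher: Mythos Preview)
Your argument is correct and follows essentially the same route as the paper: the inequality is obtained from the pointwise bound \ref{hcs}(7) combined with the quasi-polynomial structure of $e_0$, $e_1$, $\mu$, and the equality case is handled by reducing to dimension one, using the additivity of $e_0$, $e_1$, $\mu$ across the sequences $0\to K_n\to M_{n+2}\to M_n\to 0$, and invoking \ref{hcs}(8). The only cosmetic difference is that the paper reads all three additivity relations (including that of $\mu$) directly off the degree-zero and Hilbert-series parts of the single exact sequence $0\to L(K_n)\to L(M_{n+2})\to L(M_n)\to 0$ from Lemma~\ref{exact}, whereas you justify $\mu$-additivity separately via the surjectivity of $\Tor_1^A(\alpha,k)$; both are equivalent.
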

\begin{proof}
    We may assume $A$ is complete and the residue field is infinite. By (\cite{Avramov}, Theorem 4.1), Theorem \ref{e_0} and Theorem \ref{e1}, the function $i\mapsto \mu(M_i)$, $i\mapsto e_0(M_i)$ and $i\mapsto e_1(M_i)$ are quasi-polynomial type of period $2$ and degree $\operatorname{cx}(M)-1$ for all $i\gg0.$ So for $n$ large enough, let
 $$\mu(M_n)=  \left\{
\begin{array}{ll}
      \gamma n^{\operatorname{cx}(M)-1}+ \text{lower degree terms} & n \text{ even}; \\
      \gamma n^{\operatorname{cx}(M)-1}+ \text{lower degree terms} & n \text{ odd},\\
\end{array}
\right.$$
 $$e_0(M_n)=  \left\{
\begin{array}{ll}
      \alpha n^{\operatorname{cx}(M)-1}+\text{lower degree terms} & n \text{ even}; \\
      \beta n^{\operatorname{cx}(M)-1}+\text{lower degree terms} & n \text{ odd},\\
\end{array}
\right.$$
and
$$e_1(M_n)=  \left\{
\begin{array}{ll}
      an^{\operatorname{cx}(M)-1}+\text{lower degree terms} & n \text{ even}; \\
      bn^{\operatorname{cx}(M)-1}+\text{lower degree terms} & n \text{ odd}.\\
\end{array}
\right.$$
We also have $e_1(M_n)\geq e_0(M_n)-\mu(M_n)$ for all $n\geq 0$ (see \ref{hcs}). This implies
$$\lim_{n\to \infty}\dfrac{e_1(M_{2n+j})}{n^{\operatorname{cx}(M)-1}}\geq \lim_{n\to \infty}\dfrac{e_0(M_{2n+j})}{n^{\operatorname{cx}(M)-1}}-\lim_{n\to \infty}\dfrac{\mu(M_{2n+j})}{n^{\operatorname{cx}(M)-1}},$$ for $j=0,1.$ This proves the first part of the result.

Now assume $\operatorname{cx}(M)=2.$ Note that we can assume the dimension of $A$ is one (see \ref{hcs}). So by the Lemma \ref{exact}, we obtain the short exact sequence $$0\to L(K_{2n})\to L(M_{2n+2})\to L(M_{2n})\to 0$$ for all sufficiently large $n$ and this implies the following equalities.
\begin{equation*}
    \begin{split}
        \mu(M_{2n+2})-\mu(M_{2n})= & \, \mu(K_{2n}) \\  e_0(M_{2n+2})-e_0(M_{2n})= & \, e_0(K_{2n}) \\ e_1(M_{2n+2})-e_1(M_{2n})= & \,  e_1(K_{2n})
    \end{split}
\end{equation*}

Assume that for $j=0$ the equality holds. $i.e.$ $$\lim_{n\to \infty}\dfrac{e_1(M_{2n})}{n}= \lim_{n\to \infty}\dfrac{e_0(M_{2n})}{n}-\lim_{n\to \infty}\dfrac{\mu(M_{2n})}{n}.$$ Thus $a=\alpha-\gamma$ and this implies  $e_1(K_{2n}) = e_0(K_{2n}) - \mu(K_{2n})$ for all $n\gg0$. Therefore, $G(K_{2n})$ has minimal multiplicity, and hence $G(K_{2n})$ is Cohen-Macaulay (see \ref{hcs}). Similarly, by assuming the equality holds for $j = 1$, we can show that $G(K_{2n+1})$ is Cohen-Macaulay for all $n \gg 0$. This proves the result.
\end{proof}

\section{Assymptotic behaviour of $\operatorname{reg }G(M_i)$}
In this section, we aim to demonstrate that if the equality holds in Theorem \ref{cxM2} and $\operatorname{cx}(M)=2$, then the regularity of $G(M_n)$ remains bounded for all $n\gg0$ (refer to Theorem \ref{regMn}). To establish this, we require the following:
\begin{point}
    \normalfont
    Recall that $\R=\bigoplus_{n\geq 0}\m^nt^n$ is Rees ring with respect to the maximal ideal $\m$. Set $\mathfrak{M}=\m \oplus \R_+$, where $\R_+=\oplus_{n\geq 1}\m^nt^n.$ Let $E$ be a graded $\R$-module. We set $H^i(E)$ as the $i$-th graded local cohomology of $E$ with respect to $\mathfrak{M}.$ Define $a(E)=\text{sup}\{n\in \Z \,|\, H^i(E)_n\neq 0\}.$
\end{point}
\begin{point}
\label{regL}
    \normalfont
    A graded $\R$-module $E$ is considered $*$-Artinian if every descending chain of graded submodules of $E$ terminates. It can be easily proved that if $E$ is $*$-Artinian then $E_n=0$ for all $n\gg0$. Let $E$ be a finitely generated $\R$-module. Then for each $i\geq 0$, $H^i(E)$ is $*$-Artinian (see \cite{Bruns}, Proposition 3.5.4). Set $$a_i(E)=a(H^i(E)).$$ The Castelnuovo-Mumford regularity of $E$ is defined as $$\text{reg }(E)=\text{max}\{a_i(E)+i\, |\, 0\leq i\leq \text{dim }E \}.$$
\end{point}
\begin{point}
    \normalfont
    Note that we have a natural surjective homogenous homomorphism  $\phi:\R\to G(A)$ with $\phi(\mathfrak{M})=G(A)_+$, where $G(A)_+$ is the irrelevant maximal ideal of $G(A)$. So by the graded independence theorem (see \cite{SBrodman}, 13.1.6) it doesn't matter which ring we use to compute local cohomology.
\end{point}
\begin{proposition}[\cite{Part1}, 4.4]
\label{Art}
    Let $(A,\m)$ be local ring and $M$ be an $A$-module. Set $l=\text{depth }(M)$. Then  $H^i(L(M))$ is $*$-Artinian for all $0 \leq i\leq l-1$.
\end{proposition}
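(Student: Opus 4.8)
The plan is to read the statement off the defining exact sequence
$$0 \to \mathcal{R}(M) \to M[t] \to L(M)(-1) \to 0$$
from \ref{hpolyn} by passing to the long exact sequence of graded local cohomology with respect to $\mathfrak{M}$. The two inputs I would isolate first are: (i) the Rees module $\mathcal{R}(M)=\bigoplus_{n\geq 0}\m^n M t^n$ is a finitely generated $\R$-module, being generated in degree $0$ by $M$, so by the fact cited in \ref{regL} every $H^j(\mathcal{R}(M))$ is $*$-Artinian; and (ii) a graded submodule of a $*$-Artinian module is again $*$-Artinian, since any descending chain of graded submodules of the submodule is in particular such a chain in the ambient module.

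The key step is the vanishing $H^i(M[t])=0$ for $0\leq i\leq l-1$, where $l=\depth M$. I would prove this by producing a regular sequence of length $l$ inside $\mathfrak{M}$. Choose a maximal $M$-regular sequence $x_1,\ldots,x_l\in\m\subseteq\mathfrak{M}$, sitting in degree $0$ of $\R$. Since $M[t]=\bigoplus_{n\geq 0}Mt^n$ and each $x_j$ acts as multiplication by $x_j$ on every graded piece $Mt^n\cong M$, the sequence $x_1,\ldots,x_l$ is $M[t]$-regular, with $M[t]/(x_1,\ldots,x_l)M[t]=(M/(x_1,\ldots,x_l)M)[t]$. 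A regular sequence of length $l$ in $\mathfrak{M}$ forces $H^i(M[t])=0$ for $0\leq i\leq l-1$: this is the standard induction on $l$ via the sequences $0\to M[t]\xrightarrow{x_1}M[t]\to (M/x_1M)[t]\to 0$, using that $H^i(M[t])$ is $\mathfrak{M}$-torsion, so that injective multiplication by $x_1\in\mathfrak{M}$ forces it to vanish.

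With these in hand I would conclude as follows. For $0\leq i\leq l-1$ the relevant portion of the long exact sequence reads
$$H^i(M[t]) \to H^i(L(M)(-1)) \to H^{i+1}(\mathcal{R}(M)),$$
and since $H^i(M[t])=0$ this yields an injection $H^i(L(M)(-1))\hookrightarrow H^{i+1}(\mathcal{R}(M))$. As the target is $*$-Artinian by (i), so is the source by (ii); finally $H^i(L(M)(-1))=H^i(L(M))(-1)$, and a grading shift of a $*$-Artinian module is $*$-Artinian, giving the claim for $H^i(L(M))$. When $l=0$ the asserted range of $i$ is empty and there is nothing to prove.

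The main obstacle I anticipate is precisely the vanishing of $H^i(M[t])$: because $M[t]$ is \emph{not} finitely generated as an $\R$-module, one cannot simply quote the equality of depth and grade from the Noetherian theory, and the implication from the existence of a regular sequence to the vanishing of the low local cohomology must be argued directly through the torsion argument sketched above. Everything else is formal manipulation of the long exact sequence and the elementary stability of the $*$-Artinian property under submodules and shifts.
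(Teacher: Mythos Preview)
The paper does not supply its own proof of this proposition; it is simply quoted from the cited source \cite{Part1}. Your argument is correct and is essentially the standard route to this fact: pass to the long exact sequence of $H^\bullet_{\mathfrak{M}}(-)$ on $0\to\mathcal{R}(M)\to M[t]\to L(M)(-1)\to 0$, kill $H^i(M[t])$ for $i\leq l-1$ via an $M$-regular sequence sitting in degree zero of $\mathfrak{M}$ (the torsion argument you give is exactly the usual one, and it does not require finite generation of $M[t]$), and then read off that $H^i(L(M))(-1)$ embeds into the $*$-Artinian module $H^{i+1}(\mathcal{R}(M))$. Your handling of the one genuine subtlety---that $M[t]$ is not a finitely generated $\R$-module, so one must argue the vanishing directly rather than invoke a depth statement---is appropriate.
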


The following result concerns the regularity of the associated graded module of $M_n$ for sufficiently large $n$.
\begin{theorem}
\label{regMn}
     Let $(A,\m)$ be a complete intersection ring of codimension $c\geq 2$ and dimension $d\geq 1$. Let $M$ be a maximal Cohen Macaulay $A$-module. If $\operatorname{cx}(M)=2$ and $$\lim_{n\to \infty}\dfrac{e_1(M_{2n+j})}{n}= \lim_{n\to \infty}\dfrac{e_0(M_{2n+j})}{n}-\lim_{n\to \infty}\dfrac{\mu(M_{2n+j})}{n}$$ for $j=0,1$. Then $\text{reg }G(M_n)$ is bounded for all $n\gg0.$
\end{theorem}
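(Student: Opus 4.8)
The plan is to reduce the boundedness of $\operatorname{reg} G(M_n)$ to two ingredients: a \emph{uniform} bound on $\operatorname{reg} G(K_n)$ supplied by the equality hypothesis, and a short exact sequence of associated graded modules that lets one telescope. First I would assume $A$ complete with infinite residue field. The essential use of the hypothesis is to bound $\operatorname{reg} G(K_n)$ uniformly in $n$. Note $\cx(K_n)=\cx(M)-1=1$ and each $K_n$ is maximal Cohen-Macaulay; however, since $c\ge 2$, complexity one does \emph{not} force the family $\{K_n\}$ into finitely many isomorphism classes, so such a uniform bound is not automatic. This is precisely where the hypothesis enters: by Lemma \ref{cxM2} together with \ref{hcs}, the equality yields $e_1(K_n)=e_0(K_n)-\mu(K_n)$ for all $n\gg 0$; since $e_0,e_1,\mu$ are unchanged under cutting by a common superficial sequence (\ref{hcs}(4),(5)), the same equality holds in dimension $d$, so \ref{hcs}(8) gives that $G(K_n)$ is Cohen-Macaulay with minimal multiplicity. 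A Cohen-Macaulay module of minimal multiplicity has regularity at most $1$, hence $\operatorname{reg} G(K_n)\le 1$ for all $n\gg 0$.

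Next I would produce the short exact sequence at the graded level. By \ref{cxM} we have $0\to K_n\to M_{n+2}\to M_n\to 0$ for $n\gg 0$, and the key point is that this stays exact after applying $L(-)$, i.e. $0\to L(K_n)\to L(M_{n+2})\to L(M_n)\to 0$; this is Lemma \ref{exact} when $\dim A=1$. Granting the analogous statement in dimension $d$, the natural surjections $\pi^P\colon L(P)\to L(P)(-1)$, whose kernels are exactly $G(P)$, give a morphism from this sequence to its degree shift, and because each $\pi^P$ is surjective the snake lemma collapses to the short exact sequence $0\to G(K_n)\to G(M_{n+2})\to G(M_n)\to 0$ for $n\gg 0$. (Equivalently, exactness of the $L$-sequence says the $\m$-adic filtration on $M_{n+2}$ restricts to the $\m$-adic filtration on $K_n$, which is exactly the strictness needed for the associated graded sequence to be exact.)

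With this short exact sequence in hand the conclusion is immediate. Reading the long exact sequence in local cohomology with respect to $G(A)_+$ (see \ref{regL}) gives the standard inequality $a_i(G(M_{n+2}))\le\max\{a_i(G(K_n)),a_i(G(M_n))\}$ for every $i$, hence $\operatorname{reg} G(M_{n+2})\le\max\{\operatorname{reg} G(K_n),\operatorname{reg} G(M_n)\}\le\max\{1,\operatorname{reg} G(M_n)\}$ for all $n\gg 0$. Telescoping along each parity class of $n$ then bounds $\operatorname{reg} G(M_n)$ by $\max\{1,\operatorname{reg} G(M_{n_0}),\operatorname{reg} G(M_{n_0+1})\}$ for $n\ge n_0$, which is what we want.

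The main obstacle I expect is upgrading the $L$-exact sequence (hence the associated graded short exact sequence) to dimension $d$: Lemma \ref{exact} is genuinely one-dimensional, as its proof uses the isomorphism $\overline{\alpha}_x$ that stabilizes beyond $\operatorname{red}(A)$. One must either rerun that argument in dimension $d$, controlling the connecting maps $\Tor_1(M_n,A/\m^{k})\to K_n/\m^k K_n$ uniformly in $k$, or argue that exactness of the graded sequence ascends from the one-dimensional reduction — for which the Cohen-Macaulayness of $G(K_n)$, forcing a common superficial sequence to be $G(K_n)$-regular, is the natural tool. The remaining care is in passing from the normalized limits in the hypothesis to the honest equalities $e_1(K_n)=e_0(K_n)-\mu(K_n)$ for all large $n$, which is already the substance of Lemma \ref{cxM2}.
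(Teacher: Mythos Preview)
Your outline is correct, and it shares its setup with the paper: establish $0\to L(K_n)\to L(M_{n+2})\to L(M_n)\to 0$ in dimension $d$, using the Cohen--Macaulayness of $G(K_n)$ supplied by the equality hypothesis via Lemma~\ref{cxM2}. From that point your endgame genuinely diverges from the paper's. You pass by the snake lemma to $0\to G(K_n)\to G(M_{n+2})\to G(M_n)\to 0$ and telescope $\operatorname{reg} G(M_{n+2})\le\max\{\operatorname{reg} G(K_n),\operatorname{reg} G(M_n)\}$, using the uniform bound $\operatorname{reg} G(K_n)\le 1$ coming from minimal multiplicity. The paper never forms the $G$-level short exact sequence: it stays with $L$, uses $H^i(L(K_n))=0$ for $i<d$ (a consequence of $G(K_n)$ being Cohen--Macaulay) to obtain injections $H^{d-1}(L(M_{n+2}))\hookrightarrow H^{d-1}(L(M_n))$, invokes $*$-Artinianness (Proposition~\ref{Art}) to stabilize this descending chain, then translates back to bounds on $a_i(G(M_n))$ via $0\to G(M_n)\to L(M_n)\to L(M_n)(-1)\to 0$, and finally bounds $a_d(G(M_n))$ separately by $\operatorname{red}_{\m}(A)-d$. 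Your route is shorter and entirely adequate for the stated boundedness; the paper's extra work buys actual \emph{stabilization} of $a_i(L(M_n))$, which is what feeds the subsequent corollary on eventual constancy of $\operatorname{depth} G(M_n)$. As for the obstacle you flag, the paper resolves it with exactly the ingredient you name: the obstruction is $U=\operatorname{image}\bigl(L_1(M_n)\to L(K_n)\bigr)$, a finitely generated $\mathcal{R}$-submodule of $L(K_n)$; a Hilbert-polynomial count gives $\dim U\le d-1$, while $\operatorname{Ass}_{\mathcal{R}} L(K_n)=\operatorname{Ass}_{\mathcal{R}} G(K_n)$ together with Cohen--Macaulayness of $G(K_n)$ forces every nonzero submodule of $L(K_n)$ to have dimension $d$, so $U=0$.
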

\begin{proof}
     We may assume $A$ is complete. Due to the Proposition \ref{cxM}, we have a short exact sequence
     \begin{equation}\label{gap}
         0\to K_{n} \to M_{n+2}\to M_{n}\to 0 \text{ for all } n\gg0.
     \end{equation}

     \textbf{Claim:} $0\to L(K_{n})\to L(M_{n+2})\to L(M_{n})\to 0$ for all $n\gg0.$

Note that the statement is true for $d=1$ (see Lemma \ref{exact}). Now, assume $d\geq 2$. Equation \ref{gap} induces the following exact sequence.
 $$L_1(M_{2n})\xrightarrow{g} L(K_{2n})\to L(M_{2n+2})\to L(M_{2n})\to 0.$$ Set $U=\text{Image}(g)$. So we obtain the following exact sequence $$0\to U\to L(K_{2n})\to L(M_{2n+2})\to L(M_{2n})\to 0.$$ Since $U$ is a homomorphic image of $L_1(M_{2n})$, it is a finitely generated $\mathcal{R}$-module (see \ref{L_1M}).

Therefore, for all $j\gg0$, we have the following equality
    \begin{equation*}
        \begin{split}
            \ell(U_j)= & (e_2(K_{n})-e_2(M_{n+2})+e_2(M_{n}))\binom{j+d-2}{d-2}+\text{ lower degree terms}.
        \end{split}
    \end{equation*}
    This says that the dimension of $U$ is at most $d-1$. But by the Lemma \ref{cxM2}, $G(K_{n})$ is Cohen-Macaulay and we also have $\text{Ass}_{\mathcal{R}}(L(K_{n}))=\text{Ass}_{\mathcal{R}}(G(K_{n}))$ (see \cite{TONYT}, Proposition 5.6). Thus, $U=0$. This proves the claim.

    The short exact sequence $$0\to L(K_{n})\to L(M_{n+2})\to L(M_{n})\to 0$$ induces the following long exact sequence
      \begin{equation}
     \label{4}
         \begin{split}
             0 & \to H^0(L(K_n))\to H^0(L(M_{n+2}))\to H^0(L(M_n)\to \\
             \ldots
             \\ & \to H^{d-1}(L(K_n))\to H^{d-1}(L(M_{n+2}))\to H^{d-1}(L(M_n).
         \end{split}
     \end{equation}
    Since $G(K_n)$ is Cohen-Macaulay of dimension $r$, we have $H^i(L(K_n))=0$ for $i=0,1,\ldots ,d-1$ (see \cite{Part1}, Proposition 5.2). Therefore, for $i=0,1,\ldots ,d-2$, we have $H^i(L(M_{n+2}))\cong H^i(L(M_{n}))$ for all $n\gg0$. For $i=d-1$, we have the following descending chain
     \begin{equation*}
         H^i(L(M_{n}))\supseteq H^i(L(M_{n+2}))\supseteq \ldots
     \end{equation*}
     for all $n\gg0$. Since $H^i(L(M_n))$ is $*$-Artinian, the descending chain terminates. Therefore, there exists $n_0\in \Z$ such that $H^i(L(M_{n_0}))= H^i(L(M_{2n+n_0}))$ for all $n\geq 1$. Without loss of generality, let us assume $n_0$ is even. Then
     \begin{equation}
     \label{2}
         a_i(L(M_{2n+j}))= a_i(L(M_{n_0+j})) \text{ for all } n\geq 1, i=0,1,\ldots ,d-1 \text{ and } j=0,1.
     \end{equation}

    Note that we have the following short exact sequence: $$0\to G(M_n)\to L(M_n)\to L(M_n)(-1)\to 0$$ for all $n\geq 0.$ This induces the following long exact sequence.
     \begin{equation}
     \label{3}
         \begin{split}
             0 & \to H^0(G(M_n))\to H^0(L(M_n))\to H^0(L(M_n)(-1))\to \\
             \ldots
             \\ & \to H^{d-1}(G(M_n))\to H^{d-1}(L(M_n))\to H^{d-1}(L(M_n)(-1)).
         \end{split}
     \end{equation}

From equation \ref{3}, we obtain for all $1\leq i\leq d-1$ the following:
\begin{enumerate}
    \item[(a)] $a_0(G(M_n))\leq a_0(L(M_{n}))$,
    \item[(b)] $a_i(G(M_n))\leq \text{max}\{a_{i-1}(L(M_{n}))+1,a_i(L(M_{n})) \}$,
    \item[(c)] $a_{i-1}(L(M_{n}))+1 \leq a_i(G(M_n)).$
\end{enumerate}

Additionally, it is well known that $a_d(G(M_n))\leq \text{red}_{\m}(A)-d$ for all $n\geq 0$ (see \cite{NVT3}, 3.2). Hence, by equation \ref{2}, the regularity of $G(M_n)$ is bounded for all $n\gg0.$
\end{proof}

We recover a particular case of Theorem 4.3 and Theorem 5.3 in \cite{Quasipolynomial} as an application.

\begin{corollary}(with the same hypothesis as in \ref{regMn}) The functions $i\mapsto \text{depth }G(M_{2i})$ and \\ $i\mapsto \text{depth }G(M_{2i+1})$ are constant for $i\gg0.$
\end{corollary}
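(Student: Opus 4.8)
The plan is to upgrade the local cohomology comparisons obtained in the proof of Theorem \ref{regMn} from the modules $L(M_n)$ to the associated graded modules $G(M_n)$, and then to read off the depth as the least index in which local cohomology is nonzero. As in that proof we may assume $A$ is complete. First I would produce a short exact sequence at the level of associated graded modules. Taking the horizontal sequence $0\to L(K_n)\to L(M_{n+2})\to L(M_n)\to 0$ established in the Claim in the proof of Theorem \ref{regMn}, together with its degree shift by $-1$, and combining these with the three vertical sequences $0\to G(E)\to L(E)\to L(E)(-1)\to 0$ for $E=K_n,\,M_{n+2},\,M_n$, the nine lemma yields
$$0\to G(K_n)\to G(M_{n+2})\to G(M_n)\to 0$$
for all $n\gg0$, the maps in the top row being induced by functoriality from $K_n\to M_{n+2}\to M_n$.

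Next I would exploit that $G(K_n)$ is Cohen-Macaulay by Lemma \ref{cxM2}, of dimension $d$, since $K_n$ is a maximal Cohen-Macaulay module of dimension $d$ (it sits between the maximal Cohen-Macaulay modules $M_{n+2}$ and $M_n$, so the depth lemma gives $\depth K_n\geq d$). Hence $H^i(G(K_n))=0$ for $0\leq i\leq d-1$. Feeding the displayed sequence into the long exact sequence of local cohomology gives isomorphisms $H^i(G(M_{n+2}))\cong H^i(G(M_n))$ for $0\leq i\leq d-2$, and an injection $H^{d-1}(G(M_{n+2}))\hookrightarrow H^{d-1}(G(M_n))$, for all $n\gg0$. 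Since $G(M_n)$ is a finitely generated $\R$-module, each $H^{d-1}(G(M_n))$ is $*$-Artinian (see \ref{regL}), so the descending chain of images inside a fixed $H^{d-1}(G(M_n))$ stabilizes; thus in fact $H^i(G(M_{n+2}))\cong H^i(G(M_n))$ for all $0\leq i\leq d-1$ and all $n\gg0$ within each residue class of $n$ modulo $2$.

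Finally, I would invoke $\depth G(M_n)=\min\{\,i : H^i(G(M_n))\neq 0\,\}$ together with $\dim G(M_n)=d$, which forces $H^d(G(M_n))\neq 0$ for every $n$; hence the depth is governed entirely by the vanishing pattern of $H^0,\ldots,H^{d-1}$. By the previous paragraph this pattern is independent of $n$ once $n\gg0$ in each fixed parity, so $i\mapsto \depth G(M_{2i})$ and $i\mapsto \depth G(M_{2i+1})$ are eventually constant, as claimed.

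I expect the principal obstacle to be the justification of the exact sequence of associated graded modules: since neither $G(-)$ nor $L(-)$ is exact, the horizontal $G$-row must be extracted through the nine lemma rather than directly, and one must verify that the functorially induced maps $G(K_n)\to G(M_{n+2})\to G(M_n)$ really form a complex making the $3\times 3$ diagram commute. The secondary delicate point is the index $i=d-1$, where the long exact sequence only provides an injection rather than an isomorphism; there the $*$-Artinian property is essential to force eventual stabilization along each parity class.
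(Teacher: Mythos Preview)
Your argument is correct, and the nine-lemma step goes through exactly as you outline: the columns $0\to G(E)\to L(E)\to L(E)(-1)\to 0$ are exact and natural in $E$, the two lower rows are the exact sequence from the Claim and its shift, and the top row is a complex by functoriality of $G(-)$; hence $0\to G(K_n)\to G(M_{n+2})\to G(M_n)\to 0$ is exact. The remainder (Cohen--Macaulayness of $G(K_n)$, the long exact sequence, the $*$-Artinian stabilization at $i=d-1$, and reading off depth via $H^d(G(M_n))\neq 0$) is routine.

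The paper proceeds differently and more briefly. Rather than passing to $G(M_n)$, it stays with $L(M_n)$: inside the proof of Theorem \ref{regMn} it has already shown that $H^i(L(M_{n+2}))\cong H^i(L(M_n))$ for $0\leq i\leq d-2$ and that the chain at $i=d-1$ stabilizes, so the full local-cohomology pattern of $L(M_n)$ in degrees $0,\ldots,d-1$ is eventually constant along each parity class. It then invokes Proposition~5.2 of \cite{Part1}, which identifies $\depth G(M)$ with the vanishing range of the $H^i(L(M))$. Your route avoids that external black box at the cost of the extra nine-lemma step and a second stabilization argument; the paper's route is shorter because it simply recycles what was already obtained for $L(M_n)$ and defers the translation $L\rightsquigarrow G$ to a cited proposition. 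Both are valid and essentially parallel.
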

\begin{proof}
   By equation \ref{2}, we obtain that for all $n\gg0$, $a_i(L(M_{2n}))$ and $a_i(L(M_{2n+1}))$ are constants, for $i=0,1,\ldots ,r-1$. Then Proposition $5.2$ in \cite{Part1} gives the result.
\end{proof}


\providecommand{\bysame}{\leavevmode\hbox to3em{\hrulefill}\thinspace}
\providecommand{\MR}{\relax\ifhmode\unskip\space\fi MR }
\providecommand{\MRhref}[2]{
  \href{http://www.ams.org/mathscinet-getitem?mr=#1}{#2}
}

\end{document}